\documentclass[11pt]{article}
\usepackage[a4paper,margin=25mm]{geometry}
\usepackage{amsmath,amssymb,amsthm,booktabs,hyperref}
\hypersetup{hidelinks}
\newtheorem{theorem}{Theorem}
\newtheorem{proposition}[theorem]{Proposition}
\newtheorem{corollary}[theorem]{Corollary}
\newtheorem{lemma}[theorem]{Lemma}
\theoremstyle{remark}\newtheorem{remark}[theorem]{Remark}
\newcommand{\T}{\mathcal T_\alpha}
\newcommand{\E}{\mathcal E_\alpha}
\newcommand{\norm}[1]{\left\|#1\right\|_*}
\makeatother
\begin{document}
\title{Local energy and vertex deletion for $A_\alpha$-matrices}
\author{Ravinder Kumar and Piyush Sharma\\[1ex]
{\small Department of Mathematics and Computing,}\\
{\small Dr B R Ambedkar National Institute of Technology Jalandhar,}\\
{\small Punjab--144008, India.}\\[1ex]
{\small\texttt{thakurrk@nitj.ac.in}; \texttt{piyushs.mc.24@nitj.ac.in}}}
\maketitle 	
\begin{abstract}
We study the change in the trace norm and the centred energy of the $A_\alpha$-matrix of a graph under vertex deletion. A pinching argument proves that the trace norm is nondecreasing under passage from an induced subgraph to the original graph, and gives a lower bound in terms of the edges induced by the deleted vertices. Exact perturbation spectra yield sharp upper bounds for deleting a vertex or an independent set of vertices with a common neighbourhood. For $0<\alpha<1/2$, equality in these upper bounds is characterized by star and complete bipartite components, respectively. For the centred energy, we derive explicit deletion bounds and classify all connected equality graphs for $0\le\alpha<1$: the extremal graphs are stars and complete bipartite graphs for single-vertex and independent-twin deletion, respectively. We also characterize equality in the general-set trace-norm lower bound and prove strictness of the total centred-energy inequality for connected nonregular graphs when $0<\alpha<1$. Examples show that centred local energy can be negative and that the degree deviation term cannot be omitted in general.
\end{abstract}
\noindent\textbf{Keywords:} $A_\alpha$-matrix; trace norm; graph energy; vertex deletion.\\
\noindent\textbf{Mathematics Subject Classification (2020):} 05C50, 15A18, 15A60.

\section{Introduction and preliminaries}
The energy of a graph is the sum of the absolute values of the eigenvalues of its adjacency matrix. The effect of graph operations on this quantity has been studied using singular-value inequalities and trace-norm methods; see, for example, Day and So \cite{DS} and the monograph \cite{LSG}. In particular, vertex deletion leads to a natural spectral measure of the contribution of a vertex to the energy of a graph.

Espinal and Rada \cite{ER} introduced deletion-based local energy and proved that deleting a vertex of degree $d$ decreases the adjacency energy by at most $2\sqrt d$. They also characterized equality by the presence of a star component whose centre is the deleted vertex. Rakshith and Das \cite{RD} extended this bound to an independent set of $k$ vertices with a common neighbourhood of size $d$, obtaining the upper bound $2\sqrt{kd}$. They further investigated total local energy and complete multipartite graphs. Ye \cite{Ye} subsequently obtained inequalities involving the energies of all $k$-vertex-deleted subgraphs by summing embedded adjacency matrices.

The matrices $A_\alpha(G)=\alpha D(G)+(1-\alpha)A(G)$, introduced by Nikiforov \cite{N}, interpolate between the adjacency and degree matrices. Two related spectral quantities must be distinguished: the trace norm of $A_\alpha(G)$ and its energy centred at the average eigenvalue. The latter is used in the $A_\alpha$-energy literature; see Zhou et al.\ \cite{ZLCM}. Deleting a vertex changes both the degree entries and the average eigenvalue, so neither quantity can be treated by simply replacing the adjacency matrix in an existing vertex-deletion argument.

Moon and Park \cite{MP} recently studied deletion-based local Laplacian energy. They derived a single-vertex deletion bound with a star equality characterization. Their total-energy estimate contains a degree-deviation term, reflecting the change in the centring constant under vertex deletion. Our centred total-energy bound is the corresponding estimate for $A_\alpha$-matrices. For bipartite graphs, its specialization at $\alpha=1/2$ agrees, after scaling, with their Laplacian result. The contributions here include bounds for the absolute centred-energy difference, independent-twin deletion, a connected equality classification for the full range $0\le\alpha<1$, and strictness of the total bound for connected nonregular graphs when $0<\alpha<1$. The perturbation formulas themselves follow from standard trace-norm inequalities and explicit quotient spectra.

In this paper, we first prove a deletion monotonicity theorem for the $A_\alpha$ trace norm and derive sharp upper bounds with structural equality conditions. We then compute the centred perturbation spectrum for single vertices and independent twins and classify its connected equality graphs, including singular perturbations. The general-set lower bound has an equality characterization by component separation and positive semidefiniteness. Finally, we establish the total local-energy estimates, prove strictness for connected nonregular graphs, and give degree-based equality criteria at $\alpha=1$. The degree-deviation term and the centring convention are essential to this distinction.

Let $G$ be a finite simple undirected graph of order $n\ge1$ and size $m$. Write $d_v$ for the degree of $v$, $N_G(v)$ for its open neighbourhood, and $G-S$ for the subgraph induced by $V(G)\setminus S$. For $0\le\alpha\le1$, put
\begin{align}
 A_\alpha(G)&=\alpha D(G)+(1-\alpha)A(G),\label{eq:defM}\\
 \T(G)&=\norm{A_\alpha(G)},\label{eq:defT}\\
 c_G&=\frac{2\alpha m}{n},\qquad C_\alpha(G)=A_\alpha(G)-c_GI_n,\label{eq:defC}\\
 \E(G)&=\norm{C_\alpha(G)}.\label{eq:defE}
\end{align}
Here $\norm X$ denotes the trace norm. For a real symmetric matrix, this is the sum of the absolute eigenvalues. We refer to $\T(G)$ as the $A_\alpha$ trace norm and to $\E(G)$ as the centred $A_\alpha$-energy. We assign zero trace norm and zero energy to the empty graph. At $\alpha=0$, both quantities equal the adjacency energy.

For a vertex $v$, define
\[
 L^T_\alpha(G;v)=\T(G)-\T(G-v),\qquad
 L^C_\alpha(G;v)=\E(G)-\E(G-v),
\]
and
\[
 \ell^T_\alpha(G)=\sum_{v\in V(G)}L^T_\alpha(G;v),\qquad
 \ell^C_\alpha(G)=\sum_{v\in V(G)}L^C_\alpha(G;v).
\]
These definitions concern energy changes under deletion and differ from vertex energy defined by diagonal entries of the absolute value of a graph matrix. We use $e(S)$ for the number of edges of $G[S]$ and $e(S,T)$ for the number of edges between disjoint vertex sets $S,T$. We also write $Q(G)=D(G)+A(G)$ and $L(G)=D(G)-A(G)$ for the signless Laplacian and Laplacian matrices, respectively.

We shall use the following elementary matrix facts. Their proofs are included to make the equality arguments self-contained.

\begin{lemma}\label{lem:embed}
Let $B_v$ be $A_\alpha(G-v)$ embedded at the retained vertices with a zero row and column at $v$. Then $H_v=A_\alpha(G)-B_v$ is supported on $v$ and its neighbours and is the embedded $A_\alpha$-matrix of $K_{1,d_v}$. Moreover,
\begin{equation}\label{eq:sum}
 \sum_{v\in V(G)}B_v=(n-2)A_\alpha(G),\qquad n\ge2.
\end{equation}
\end{lemma}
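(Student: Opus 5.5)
The plan is to compute $H_v=A_\alpha(G)-B_v$ entry by entry. Write $A_\alpha(G)=\alpha D(G)+(1-\alpha)A(G)$. First consider the off-diagonal entries. For $x\neq y$, both different from $v$, the edge $xy$ lies in $G$ exactly when it lies in $G-v$. So the $(x,y)$ entries of $A_\alpha(G)$ and $B_v$ agree, and $H_v$ has entry $0$ there. The entries in row and column $v$ of $B_v$ are zero. Hence $(H_v)_{vx}=(1-\alpha)$ if $x\in N_G(v)$ and $0$ otherwise.

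Next consider the diagonal entries. We have $(H_v)_{vv}=\alpha d_v$. For $x\neq v$, the degree of $x$ in $G-v$ is $d_x-1$ if $x\in N_G(v)$ and $d_x$ otherwise, so $(H_v)_{xx}=\alpha$ or $0$ accordingly. Thus $H_v$ is supported on $\{v\}\cup N_G(v)$. On this set it equals $\alpha D+(1-\alpha)A$ of the star with centre $v$ and leaves $N_G(v)$, whose centre has degree $d_v$ and whose leaves have degree $1$. This is the embedded $A_\alpha(K_{1,d_v})$. If $d_v=0$, then $H_v=0$, which is consistent with the convention that $K_{1,0}$ contributes nothing.

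For the sum identity, I will avoid the $B_v$ directly. Summing $H_v$ gives $\sum_v H_v=nA_\alpha(G)-\sum_v B_v$, so it suffices to show $\sum_v H_v=2A_\alpha(G)$.

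Off the diagonal, the entry $(x,y)$ with $xy\in E(G)$ receives $1-\alpha$ from $H_x$ and $1-\alpha$ from $H_y$, and nothing from the other $H_v$. This totals $2(1-\alpha)$, which is twice the corresponding entry of $A_\alpha(G)$. Non-edges receive $0$.

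On the diagonal, vertex $x$ receives $\alpha d_x$ from $H_x$ and $\alpha$ from each $H_v$ with $v\in N_G(x)$. This totals $2\alpha d_x$, again twice the entry of $A_\alpha(G)$.

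Hence $\sum_v B_v=(n-2)A_\alpha(G)$. The hypothesis $n\ge2$ is needed only so that each $G-v$ is nonempty and $B_v$ is a genuine embedded matrix; the algebra itself holds for any $n$. The only point requiring care is the diagonal bookkeeping, namely that a neighbour of $v$ loses exactly $\alpha$ on deletion. Everything else is direct verification.
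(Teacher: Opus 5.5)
Your proof is correct and follows the paper's entrywise verification; the description of $H_v$ is the same. For the summation identity, you count contributions to the stars to get $\sum_v H_v=2A_\alpha(G)$ and then subtract from $nA_\alpha(G)$. The paper instead counts directly how many $B_v$ retain each entry: $n-2$ for an edge entry, and $(n-1)d_u-d_u$ on the diagonal. The two arguments are complementary versions of the same double count.
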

\begin{proof}
The diagonal entry of $H_v$ at $v$ is $\alpha d_v$, each neighbour has diagonal entry $\alpha$, and its only off-diagonal entries are $1-\alpha$ along edges incident with $v$. For the summation identity, an off-diagonal edge entry survives in $n-2$ embedded deletions. At a fixed vertex $u$, the surviving degrees sum to $(n-1)d_u-\sum_{v\ne u}\mathbf1_{\{u\sim v\}}=(n-2)d_u$.
\end{proof}

\begin{lemma}\label{lem:dual}
	Let $X,Y$ be real symmetric matrices of the same order. Then $\norm X=\max_{-I\le Z\le I}\operatorname{tr}(ZX)$, where $Z$ is real symmetric. If $Z$ attains this maximum, then $Zx=\operatorname{sgn}(\lambda)x$ whenever $Xx=\lambda x$ and $\lambda\ne0$, and $ZX$ is positive semidefinite. Equality in $\norm{X+Y}\le\norm X+\norm Y$ holds precisely when a common such maximizer exists for $X$ and $Y$.
\end{lemma}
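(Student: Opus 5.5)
We will derive everything from the spectral decomposition of $X$. Write $X=\sum_i\lambda_i u_iu_i^{T}$ with an orthonormal eigenbasis, and put $Z_0=\sum_i\operatorname{sgn}(\lambda_i)u_iu_i^{T}$, using $\operatorname{sgn}(0)=0$. Then $-I\le Z_0\le I$ and $\operatorname{tr}(Z_0X)=\sum_i|\lambda_i|=\norm X$.

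For the reverse inequality, take any symmetric $Z$ with $-I\le Z\le I$. Then
\[
\operatorname{tr}(ZX)=\sum_i\lambda_i\,u_i^{T}Zu_i,\qquad |u_i^{T}Zu_i|\le1,
\]
so $\operatorname{tr}(ZX)\le\sum_i|\lambda_i|$. This proves the variational formula, and the maximum is attained at $Z_0$.

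Now suppose $Z$ attains the maximum. Each term satisfies $\lambda_i u_i^{T}Zu_i\le|\lambda_i|$, and the terms sum to $\sum_i|\lambda_i|$, so every term is an equality. Hence $u_i^{T}Zu_i=\operatorname{sgn}(\lambda_i)$ whenever $\lambda_i\ne0$.

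This step is the main obstacle: we must upgrade equality of a quadratic form to an eigenvector statement. We do it as follows. If $\lambda_i>0$, then $I-Z\ge0$ and $u_i^{T}(I-Z)u_i=0$. For a positive semidefinite $P$, $x^{T}Px=0$ forces $Px=0$, since $\|P^{1/2}x\|^2=0$. Hence $(I-Z)u_i=0$, i.e.\ $Zu_i=u_i$. If $\lambda_i<0$, apply the same argument to $I+Z$ to get $Zu_i=-u_i$.

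An arbitrary eigenvector $x$ for an eigenvalue $\lambda\ne0$ is a combination of the $u_i$ with $\lambda_i=\lambda$. So $Zx=\operatorname{sgn}(\lambda)x$ by linearity, which is the stated eigenvector property.

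From this, $ZX=\sum_{\lambda_i\ne0}\lambda_i\,Zu_iu_i^{T}=\sum_i|\lambda_i|u_iu_i^{T}=|X|$. Hence $ZX$ is positive semidefinite, and in particular symmetric.

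For the triangle inequality, we first establish it from the variational formula. Let $Z$ maximize for $X+Y$. Then
\[
\norm{X+Y}=\operatorname{tr}(ZX)+\operatorname{tr}(ZY)\le\norm X+\norm Y .
\]
\textbf{Equality case.} Equality holds exactly when both inequalities $\operatorname{tr}(ZX)\le\norm X$ and $\operatorname{tr}(ZY)\le\norm Y$ are tight. In that case $Z$ is a common maximizer for $X$ and $Y$.

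Conversely, suppose $Z$ is a common maximizer for $X$ and $Y$. Then
\[
\operatorname{tr}(Z(X+Y))=\norm X+\norm Y\ge\norm{X+Y}\ge\operatorname{tr}(Z(X+Y)),
\]
where the last step is the upper bound in the variational formula. This forces equality throughout.
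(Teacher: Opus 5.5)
Your proposal is correct and follows essentially the same route as the paper: the bound $\operatorname{tr}(ZX)=\sum_i\lambda_i u_i^TZu_i\le\sum_i|\lambda_i|$, attainment at the sign matrix $Z_0$, the positive-semidefinite kernel argument applied to $I\mp Z$, the identification $ZX=|X|$, and the equality case of the triangle inequality via a maximizer for $X+Y$. The only cosmetic difference is that you pass from the basis vectors $u_i$ to an arbitrary eigenvector by linearity within its eigenspace, whereas the paper re-chooses an orthonormal eigenbasis containing that vector; both are valid.
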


\begin{proof}
	Let $X$ be a real symmetric matrix of order $n$. Since $X$
	is symmetric, there exists an orthogonal matrix $U$ such that
	\[
	X=U\Lambda U^T,
	\qquad
	\Lambda=\operatorname{diag}(\lambda_1,\ldots,\lambda_n),
	\]
	where $\lambda_1,\ldots,\lambda_n$ are the eigenvalues of $X$.
	Let $Z$ be any real symmetric matrix satisfying
	$-I\le Z\le I$, and put $W=U^TZU$. Since $U$ is
	orthogonal, we have
	\[
	-I\le W\le I.
	\]
	In particular, the diagonal entries of $W$ satisfy
	$-1\le w_{ii}\le 1$ for every $i=1,\ldots,n$.
	Consequently,
	\begin{align*}
		\operatorname{tr}(ZX)
		&=\operatorname{tr}(U^TZU\Lambda)\\
		&=\sum_{i=1}^n\lambda_iw_{ii}\\
		&\le\sum_{i=1}^n|\lambda_i|
		=\operatorname{tr}|X|
		=\norm X.
	\end{align*}
	Thus,
	\[
	\max_{-I\le Z\le I}\operatorname{tr}(ZX)
	\le\norm X.
	\]
	To establish the reverse inequality, define
	\[
	Z_0=U\operatorname{diag}
	\bigl(\operatorname{sgn}(\lambda_1),\ldots,
	\operatorname{sgn}(\lambda_n)\bigr)U^T,
	\]
	where $\operatorname{sgn}(0)=0$.
	Clearly, $Z_0$ is real symmetric and
	$-I\le Z_0\le I$. Moreover,
	\begin{align*}
		\operatorname{tr}(Z_0X)
		&=\sum_{i=1}^n
		\lambda_i\operatorname{sgn}(\lambda_i)\\
		&=\sum_{i=1}^n|\lambda_i|
		=\norm X.
	\end{align*}
	Hence,
	\[
	\norm X=\max_{-I\le Z\le I}\operatorname{tr}(ZX).
	\]
	Next, suppose that a real symmetric contraction $Z$
	attains this maximum. Then
	\[
	\sum_{i=1}^n\lambda_iw_{ii}
	=\sum_{i=1}^n|\lambda_i|.
	\]
	Since
	$\lambda_iw_{ii}\le|\lambda_i|$ for every $i$,
	equality in the sum implies
	\[
	w_{ii}=\operatorname{sgn}(\lambda_i)
	\qquad\text{whenever }\lambda_i\ne0.
	\]
	We now show that this equality forces the corresponding
	eigenvectors to be eigenvectors of $Z$.
	Let $x$ be any unit eigenvector of $X$ corresponding
	to a nonzero eigenvalue $\lambda$. Since $Z$ attains
	the maximum, the preceding equality argument, applied
	to an orthonormal eigenbasis of $X$ containing $x$,
	gives
	\[
	x^TZx=\operatorname{sgn}(\lambda).
	\]
	If $\lambda>0$, then $x^TZx=1$. Since $Z\le I$,
	the matrix $I-Z$ is positive semidefinite and
	\[
	x^T(I-Z)x=0.
	\]
	For a positive semidefinite matrix $A$,
	the equality $x^TAx=0$ implies $Ax=0$.
	Therefore,
	\[
	(I-Z)x=0,
	\qquad\text{so that}\qquad Zx=x.
	\]
	Similarly, if $\lambda<0$, then $x^TZx=-1$.
	Since $I+Z$ is positive semidefinite,
	\[
	x^T(I+Z)x=0
	\]
	implies $(I+Z)x=0$, and hence $Zx=-x$.
	Thus, for every eigenvector $x$ of $X$ corresponding
	to a nonzero eigenvalue $\lambda$,
	\[
	Zx=\operatorname{sgn}(\lambda)x.
	\]
	It follows that $ZX$ and $|X|$ agree on every
	eigenspace of $X$ corresponding to a nonzero
	eigenvalue. They also agree on $\ker X$, since
	both operators vanish there. Consequently,
	\[
	ZX=|X|\ge0.
	\]
	Finally, we establish the equality condition in
	the triangle inequality. By the variational formula,
	\begin{align*}
		\norm{X+Y}
		&=\max_{-I\le Z\le I}
		\operatorname{tr}\bigl(Z(X+Y)\bigr)\\
		&=\max_{-I\le Z\le I}
		\bigl(\operatorname{tr}(ZX)
		+\operatorname{tr}(ZY)\bigr)\\
		&\le
		\max_{-I\le Z\le I}\operatorname{tr}(ZX)
		+\max_{-I\le Z\le I}\operatorname{tr}(ZY)\\
		&=\norm X+\norm Y.
	\end{align*}
	Suppose first that
	\[
	\norm{X+Y}=\norm X+\norm Y.
	\]
	Choose a symmetric contraction $Z_*$ attaining
	the maximum for $X+Y$. Then
	\[
	\operatorname{tr}(Z_*X)
	+\operatorname{tr}(Z_*Y)
	=\norm X+\norm Y.
	\]
	However, the variational formula gives
	\[
	\operatorname{tr}(Z_*X)\le\norm X,
	\qquad
	\operatorname{tr}(Z_*Y)\le\norm Y.
	\]
	Therefore, equality in their sum forces
	\[
	\operatorname{tr}(Z_*X)=\norm X,
	\qquad
	\operatorname{tr}(Z_*Y)=\norm Y.
	\]
	Hence $Z_*$ is a common maximizer for $X$ and $Y$.
	Conversely, suppose that there exists a real
	symmetric contraction $Z_*$ such that
	\[
	\operatorname{tr}(Z_*X)=\norm X,
	\qquad
	\operatorname{tr}(Z_*Y)=\norm Y.
	\]
	Then
	\begin{align*}
		\norm{X+Y}
		&\ge\operatorname{tr}\bigl(Z_*(X+Y)\bigr)\\
		&=\operatorname{tr}(Z_*X)
		+\operatorname{tr}(Z_*Y)\\
		&=\norm X+\norm Y.
	\end{align*}
	Combining this with the triangle inequality yields
	\[
	\norm{X+Y}=\norm X+\norm Y.
	\]
\end{proof}

\section{Main results}
\subsection{Deletion monotonicity of the trace norm}
We begin with a lower bound that applies to an arbitrary set of deleted vertices.
\begin{theorem}\label{thm:monotone}
For every $S\subseteq V(G)$,
\begin{equation}\label{eq:monotone}
 \T(G)-\T(G-S)\ge2\alpha e(S)\ge0.
\end{equation}
For a single vertex $v$, $L^T_\alpha(G;v)=0$ if and only if $v$ is isolated.
\end{theorem}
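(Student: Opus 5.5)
The plan is to use the variational formula of Lemma~\ref{lem:dual} with a block-diagonal contraction, which is the pinching argument in disguise. Let $R=V(G)\setminus S$, and order the vertices so that
\[
A_\alpha(G)=\begin{pmatrix} M_{SS} & M_{SR}\\ M_{RS} & M_{RR}\end{pmatrix}.
\]
The two diagonal blocks are identified as follows. $M_{SS}$ has diagonal entries $\alpha d_s$ for $s\in S$. For the other block,
\[
M_{RR}=A_\alpha(G-S)+\alpha\Delta,
\]
where $\Delta$ is diagonal with entries $\Delta_{rr}=|N_G(r)\cap S|$. This holds because the degree of $r$ in $G$ exceeds its degree in $G-S$ by exactly its number of neighbours in $S$.

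Choose a maximizer $Z_0$ for $A_\alpha(G-S)$ in Lemma~\ref{lem:dual}, for instance $Z_0=\operatorname{sgn}(A_\alpha(G-S))$. Put $Z=I_S\oplus Z_0$, which is a symmetric contraction. Since $Z$ is block diagonal, the off-diagonal blocks $M_{SR}$, $M_{RS}$ do not contribute to $\operatorname{tr}(ZA_\alpha(G))$. The $S$-block therefore contributes
\[
\operatorname{tr}M_{SS}=\alpha\sum_{s\in S}d_s=\alpha\bigl(2e(S)+e(S,R)\bigr).
\]
The $R$-block contributes
\[
\operatorname{tr}(Z_0M_{RR})=\T(G-S)+\alpha\sum_{r\in R}(Z_0)_{rr}\Delta_{rr}\ \ge\ \T(G-S)-\alpha e(S,R).
\]
The inequality uses $|(Z_0)_{rr}|\le1$ and $\sum_r\Delta_{rr}=e(S,R)$. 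Adding the two contributions and applying Lemma~\ref{lem:dual} gives
\[
\T(G)\ \ge\ \operatorname{tr}(ZA_\alpha(G))\ \ge\ \T(G-S)+2\alpha e(S),
\]
which is \eqref{eq:monotone}. The cross terms $e(S,R)$ cancel, and this cancellation is the point of taking $Z=+I$ on $S$.

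For a single vertex, the inequality only yields $L^T_\alpha(G;v)\ge0$, since $e(\{v\})=0$. If $v$ is isolated, then $A_\alpha(G)=A_\alpha(G-v)\oplus(0)$, so $L^T_\alpha(G;v)=0$.

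For the converse, suppose $d_v\ge1$. I would show that no symmetric contraction $Z$ with $\operatorname{tr}(ZA_\alpha(G))=\T(G-v)$ can be a maximizer for $A_\alpha(G)$. The plan is to perturb $Z=1\oplus Z_0$ off the block-diagonal form. Let $b\ne0$ be the column of $M_{Rv}$, which has entries $1-\alpha$ on $N_G(v)$. I would look for a contraction of the form
\[
Z(\varepsilon)=\begin{pmatrix} \cos\varepsilon & \sin\varepsilon\, y^T\\ \sin\varepsilon\, y & Z_0'\end{pmatrix},
\]
with $y$ a unit vector in the direction of $(I\mp Z_0)b$, and $Z_0'$ adjusted so that $Z(\varepsilon)$ remains a contraction. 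This is essentially conjugation of $1\oplus Z_0$ by a small rotation in the plane spanned by $e_v$ and $y$. The first-order gain in the trace is $2\sin\varepsilon\,y^Tb$, while the losses are $O(\varepsilon^2)$. This gives $\T(G)>\T(G-v)$ unless $b$ lies in a subspace where $Z_0$ already acts as $+1$.

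That remaining case has to be handled separately. If $Z_0b=b$, use the equality characterization of Lemma~\ref{lem:dual}: equality would force every maximizer $Z_*$ for $A_\alpha(G)$ to make $Z_*A_\alpha(G)$ positive semidefinite. Testing this on the two-dimensional space spanned by $e_v$ and $b$ should then give a contradiction with $b\ne0$.

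I expect this strictness step to be the main obstacle. The inequality itself is just the one-line choice of contraction above. Strictness needs a genuine use of the off-diagonal block, including the delicate cases $\alpha=0$ (where the diagonal gain $\alpha d_v$ vanishes) and $Z_0$ having eigenvalue $+1$ on $b$. If the rotation argument becomes unwieldy, I would fall back on a direct $2\times2$ compression. That route compares the eigenvalues of $A_\alpha(G)$ restricted to $\operatorname{span}\{e_v,b\}\oplus(\cdot)$ with those of the direct sum, using the strict inequality $\bigl\|\begin{pmatrix}a&c\\c&d\end{pmatrix}\bigr\|_*>|a|+|d|$ whenever $c\ne0$ and $ad\ge0$.
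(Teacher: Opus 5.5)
\paragraph{Verdict.} Your proof of \eqref{eq:monotone} and of the isolated-vertex direction is correct. The converse, $d_v\ge1\Rightarrow L^T_\alpha(G;v)>0$, has a genuine gap: it is only a plan, and its concrete steps do not hold as stated.

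\paragraph{The inequality.} Your argument is the paper's pinching argument written with an explicit dual certificate. The contraction $I_S\oplus Z_0$ realizes at once $\norm{M[S,S]}+\norm{M[T,T]}\le\norm M$, $\norm{M[S,S]}\ge\operatorname{tr}M[S,S]$ and $\norm{M[T,T]}\ge\T(G-S)-\alpha e(S,T)$.

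\paragraph{Where the strictness sketch fails.} Write $N=N_G(v)$.
\begin{enumerate}
\item Conjugating $1\oplus Z_0$ by a small rotation in the plane of $e_v$ and $y$ gives the off-diagonal column $\varepsilon(I-Z_0)y+O(\varepsilon^2)$, not $\sin\varepsilon\,y$. The first-order gain is $2\varepsilon\,y^T(I-Z_0)b$. With $y$ along $(I-Z_0)b$ this does give strictness when $Z_0b\ne b$.
\item The residual case $Z_0b=b$ is not closed, and it genuinely occurs. For $G=K_4$, $\mathbf 1_N$ is a Perron vector of $A_\alpha(K_3)$, so $Z_0=\operatorname{sgn}(A_\alpha(K_3))$ fixes $b$. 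Testing $(1\oplus Z_0)M\succeq0$ on $\operatorname{span}\{e_v,b\}$ only gives
\[
\begin{pmatrix}\alpha d_v&\|b\|^2\\ \|b\|^2&b^TM_{RR}b\end{pmatrix}\succeq0 .
\]
This is contradictory at $\alpha=0$ but not in general: for $K_4$ its determinant is $9(1-\alpha)^2(4\alpha-1)>0$ when $1/4<\alpha<1$.
\item At $\alpha=1$ one has $b=0$, so neither device applies.
\item The ``diagonal gain $\alpha d_v$'' you allude to is offset by $\alpha\sum_{u\in N}(Z_0)_{uu}$. This equals $-\alpha d_v$ for some maximizers, e.g.\ $Z_0=-I$ when $G=K_{1,d}$.
\item Your fallback $2\times2$ inequality is false when $0<c^2\le ad$; take $a=d=2$, $c=1$.
\end{enumerate}

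\paragraph{The missing idea.} Track the slack in your own certificate. Take $B=A_\alpha(G-v)$, $Z_0=\operatorname{sgn}(B)$ and $Z=1\oplus Z_0$. Then
\[
\T(G)\ge\operatorname{tr}(ZM)=\T(G-v)+\alpha\sum_{u\in N}\bigl(1+(Z_0)_{uu}\bigr).
\]
Every $(Z_0)_{uu}>-1$. Equality would put $e_u$ in the negative spectral subspace of $B$ and force $B_{uu}<0$, whereas $B_{uu}=\alpha d_{G-v}(u)\ge0$. Hence $L^T_\alpha(G;v)>0$ for all $0<\alpha\le1$ as soon as $N\ne\varnothing$, with no perturbation needed.

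At $\alpha=0$ the slack vanishes, so argue by contradiction. If $\T(G)=\T(G-v)$, then $Z$ is a maximizer for $M$, so $ZM\succeq0$ by Lemma~\ref{lem:dual}. Its $(v,v)$ entry is $0$, so its $v$th row $(0,b^T)$ must vanish. This contradicts $b=\mathbf 1_N\ne0$.

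\paragraph{Comparison with the paper.} The paper splits into three cases instead:
\begin{enumerate}
\item for $\alpha\ge1/2$, the trace formula $\T=2\alpha m$;
\item for $0<\alpha<1/2$, a common maximizer $K$ from triangle equality in $B=P+(-R)$ with $K\mathbf b=-\mathbf b$, which contradicts the symmetry of $(1\oplus K)M$;
\item at $\alpha=0$, a citation of Espinal--Rada.
\end{enumerate}
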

\begin{proof}
Let $T=V(G)\setminus S$ and partition $M=A_\alpha(G)$ accordingly. Set $R=\alpha\operatorname{diag}_{u\in T}|N_G(u)\cap S|$. Then
\[
 M[T,T]=A_\alpha(G-S)+R,\qquad
 \operatorname{tr}R=\alpha e(S,T).
\]
Pinching is contractive for the trace norm: for $U=I_S\oplus(-I_T)$,
\[
 \norm{M[S,S]}+\norm{M[T,T]}
 =\norm{\tfrac12(M+UMU)}\le\norm M.
\]
Since $\norm{M[S,S]}\ge\operatorname{tr}M[S,S]=\alpha\sum_{v\in S}d_v$ and $R$ is positive semidefinite,
\begin{align}
 \T(G-S)&\le\norm{M[T,T]}+\alpha e(S,T) \nonumber \\
 &\le \T(G) - \norm{M[S,S]} + \alpha e(S,T) \nonumber \\
 &\le \T(G) -\alpha\sum_{v\in S}d_v + \alpha e(S,T). \nonumber
\end{align}
Because $\sum_{v\in S}d_v=2e(S)+e(S,T)$, we obtain \eqref{eq:monotone}. The empty-set and full-set cases follow by the same argument, with empty blocks interpreted in the usual way.

The assertion about isolated vertices is immediate in one direction. Suppose $d=d_v>0$. For \(\alpha\ge \frac12\), we can write
\[
A_\alpha(G)=(2\alpha-1)D(G)+(1-\alpha)(D(G)+A(G)).
\]
Both \(D(G)\) and \(D(G)+A(G)=Q(G)\) are positive semidefinite. Therefore, \(A_\alpha(G)\) is also positive semidefinite. Similarly, \(A_\alpha(G-v)\) is positive semidefinite. Hence,
\[
\T(G)=\operatorname{tr}(A_\alpha(G))=2\alpha m,
\qquad
\T(G-v)=2\alpha(m-d_v).
\]
Therefore,
\[
L_\alpha^T(G;v)=2\alpha d_v>0.
\]
% the trace formula established below gives $L^T_\alpha(G;v)=2\alpha d>0$.
  At $\alpha=0$, strict positivity follows from \cite[Theorem 2]{ER}. It remains to consider the case \(0<\alpha<1/2\). Ordering the vertices so that \(v\) comes first, we write
  \begin{equation*}
  	M=A_\alpha(G)=
  	\begin{pmatrix}
  		\alpha d & \mathbf b^T\\
  		\mathbf b & P
  	\end{pmatrix},
  	\qquad
  	B=A_\alpha(G-v)=P-R,
  \end{equation*}
  where
  \begin{equation*}
  	\mathbf b=(1-\alpha)\mathbf 1_{N_G(v)}
  	\quad\text{and}\quad
  	R=\alpha\operatorname{diag}\mathbf 1_{N_G(v)}.
  \end{equation*}
  Here, \(\mathbf 1_{N_G(v)}\) denotes the indicator vector of \(N_G(v)\) in the retained vertex coordinates, and \(\operatorname{diag}\mathbf 1_{N_G(v)}\) denotes the diagonal matrix whose diagonal entries are \(1\) at the coordinates corresponding to \(N_G(v)\) and \(0\) elsewhere. If $\norm M=\norm B$, equality must hold in both
\[
 \norm M\ge\alpha d+\norm P,\qquad
 \norm B\le\norm P+\alpha d.
\]
By Lemma~\ref{lem:dual}, triangle equality in $B=P+(-R)$ gives a symmetric contraction $K$ maximizing both $P$ and $-R$. Since $\alpha>0$, $Ke_u=-e_u$ for every neighbour $u$ of $v$, and hence $K\mathbf b=-\mathbf b$. Set $Z=1\oplus K$. Then $\operatorname{tr}(ZM)=\alpha d+\norm P=\norm M$, so $ZM$ must be symmetric. Its upper-right block is $\mathbf b^T$, whereas its lower-left block is $K\mathbf b=-\mathbf b$, a contradiction because $\mathbf b\ne0$. Thus $L^T_\alpha(G;v)>0$.
\end{proof}

We next characterize equality in the general-set lower bound.
\begin{theorem}\label{thm:lowerclass}
Let $0<\alpha\le1$ and $S\subseteq V(G)$. Equality
\[
 \T(G)-\T(G-S)=2\alpha e(S)
\]
holds if and only if there are no edges between $S$ and $V(G)\setminus S$ and $A_\alpha(G[S])$ is positive semidefinite. Thus $S$ must be a union of components whose $A_\alpha$-matrices are positive semidefinite. For $\alpha\ge1/2$, the positive-semidefinite condition is automatic.
\end{theorem}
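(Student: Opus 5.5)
The plan is to trace equality back through the three inequalities in the proof of Theorem~\ref{thm:monotone}. Write $T=V(G)\setminus S$, $M=A_\alpha(G)$ and $R=\alpha\operatorname{diag}_{u\in T}|N_G(u)\cap S|$ as there. The chain was
\[
\T(G-S)\le\norm{M[T,T]}+\operatorname{tr}R\le \T(G)-\norm{M[S,S]}+\alpha e(S,T)\le \T(G)-\alpha\textstyle\sum_{v\in S}d_v+\alpha e(S,T).
\]
Equality $\T(G)-\T(G-S)=2\alpha e(S)$ therefore forces equality in each step. In particular, the last step gives $\norm{M[S,S]}=\operatorname{tr}M[S,S]$. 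For a real symmetric matrix this happens exactly when $M[S,S]$ is positive semidefinite, since $\sum|\lambda_i|=\sum\lambda_i$ forces every $\lambda_i\ge0$.

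The main step is to show that equality forces $e(S,T)=0$. I would run the same argument as in the isolated-vertex part of Theorem~\ref{thm:monotone}.
\begin{itemize}
\item Equality in the first step is triangle equality in $A_\alpha(G-S)=M[T,T]+(-R)$. By Lemma~\ref{lem:dual}, there is a symmetric contraction $K$ on $\mathbb R^T$ maximizing both $M[T,T]$ and $-R$. Since $\alpha>0$, this gives $Ke_u=-e_u$ for every $u\in T$ with a neighbour in $S$.
\item Since $M[S,S]\ge0$, the identity $I_S$ is a maximizer for it. Set $Z=I_S\oplus K$. Equality in the pinching step gives $\operatorname{tr}(ZM)=\norm{M[S,S]}+\norm{M[T,T]}=\norm M$, so $Z$ is a maximizer for $M$.
\item By Lemma~\ref{lem:dual}, $ZM$ is then positive semidefinite, hence symmetric. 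Its off-diagonal blocks are $M[S,T]$ and $KM[T,S]$.
\item Every nonzero row of $M[T,S]$ is indexed by a vertex $u$ with a neighbour in $S$. Each column of $M[T,S]$ lies in the span of such $e_u$, and $K$ acts there as $-I$. Hence $KM[T,S]=-M[T,S]$.
\item Symmetry then gives $M[S,T]^T=-M[T,S]$, that is, $M[T,S]=-M[T,S]$, so $M[T,S]=0$.
\end{itemize}
For $\alpha<1$ this means $e(S,T)=0$. For $\alpha=1$ the off-diagonal blocks vanish identically and this argument says nothing. Even so, $Ke_u=-e_u$ together with $K$ maximizing $M[T,T]=D(G-S)+R$, whose diagonal entry at $u$ is positive, gives a contradiction. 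Indeed, $M[T,T]$ is diagonal and positive semidefinite, so a maximizer must satisfy $Ke_u=e_u$ whenever the entry is nonzero. Hence there is no such $u$, and again $e(S,T)=0$. Once $e(S,T)=0$ we have $M[S,S]=A_\alpha(G[S])$, which completes the necessity direction.

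For sufficiency, suppose there are no edges between $S$ and $T$. Then $M=A_\alpha(G[S])\oplus A_\alpha(G-S)$, so $\T(G)=\norm{A_\alpha(G[S])}+\T(G-S)$. If in addition $A_\alpha(G[S])\ge0$, its trace norm equals its trace, which is $\alpha\sum_{v\in S}d_v=2\alpha e(S)$.

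The remaining remarks are straightforward.
\begin{itemize}
\item With no $S$--$T$ edges, $S$ is a union of components of $G$.
\item For $\alpha\ge1/2$, the decomposition $A_\alpha=(2\alpha-1)D+(1-\alpha)Q$ used in the proof of Theorem~\ref{thm:monotone} shows that the positive-semidefinite condition holds automatically.
\item The degenerate cases $S=\emptyset$ and $S=V(G)$ are handled with the empty-block convention.
\end{itemize}
The step I expect to need the most care is the symmetry argument that turns $ZM\ge0$ into $M[T,S]=0$. In particular, one must check that $K$ acts as $-I$ on the column space of $M[T,S]$ and not only on individual basis vectors.
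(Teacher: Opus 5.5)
Your proposal is correct and follows the paper's proof essentially step for step. It forces equality in each of the three inequalities, takes the common maximizer $K$ with $Ke_u=-e_u$ on the cut vertices, forms $Z=I_S\oplus K$, and uses symmetry of $ZM$ to get $M[T,S]=0$. The only deviation is at $\alpha=1$: the paper simply notes that the trace-norm difference equals $2(e(S)+e(S,T))$, whereas you obtain $e(S,T)=0$ from the incompatible requirements $Ke_u=\pm e_u$ on the diagonal matrix $M[T,T]$, and both routes are valid.
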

\begin{proof}
Use the notation of Theorem~\ref{thm:monotone}, and put $P=M[T,T]$ and $B=A_\alpha(G-S)=P-R$. The difference between the two sides of the lower bound is the sum of three nonnegative terms:
\begin{align*}
 &\norm M-\norm{M[S,S]}-\norm P,\\
 &\norm{M[S,S]}-\operatorname{tr}M[S,S],\\
 &\norm P+\operatorname{tr}R-\norm B.
\end{align*}
Consequently, equality forces all three terms to vanish. For $0<\alpha<1$, triangle equality for $B=P+(-R)$ gives a symmetric contraction $K$ maximizing $P$ and $-R$. Every vertex $u\in T$ incident with an edge from $S$ has a strictly positive diagonal entry of $R$, so $Ke_u=-e_u$. The second equality gives $M[S,S]\succeq0$. Hence $Z=I_S\oplus K$ satisfies
\[
 \operatorname{tr}(ZM)=\operatorname{tr}M[S,S]+\norm P=\norm M.
\]
By Lemma~\ref{lem:dual}, $ZM$ is symmetric. If $W=M[T,S]=(1-\alpha)A[T,S]$, then $KW=-W$, while the upper-right block of $ZM$ is $W^T$. Symmetry requires $KW=W$, whence $W=0$. Since $\alpha<1$, there are no edges between the two parts.

At $\alpha=1$, the trace-norm difference is $2(e(S)+e(S,T))$, so equality likewise forces $e(S,T)=0$. In either case, the vanishing cut identifies $M[S,S]$ with $A_\alpha(G[S])$. Conversely, when the cut is empty, block additivity gives $\T(G)-\T(G-S)=\norm{A_\alpha(G[S])}$, which equals its trace $2\alpha e(S)$ exactly when this block is positive semidefinite. Empty blocks cause no difficulty.
\end{proof}
\begin{corollary}\label{cor:strictcut}
If $G$ is connected, $0<\alpha\le1$, and $\varnothing\ne S\ne V(G)$, then
\[
 \T(G)-\T(G-S)>2\alpha e(S).
\]
\end{corollary}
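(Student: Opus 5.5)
The corollary follows from Theorem~\ref{thm:monotone} together with the equality characterization in Theorem~\ref{thm:lowerclass}. By \eqref{eq:monotone} we already have $\T(G)-\T(G-S)\ge2\alpha e(S)$ for every $S$. It therefore suffices to show that equality is impossible under the stated hypotheses.

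Suppose equality held. Since $0<\alpha\le1$, Theorem~\ref{thm:lowerclass} applies to $S$. It forces $e(S,V(G)\setminus S)=0$, so there are no edges between $S$ and its complement $T=V(G)\setminus S$.

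Because $S\ne\varnothing$ and $S\ne V(G)$, both $S$ and $T$ are nonempty. Hence $\{S,T\}$ is a partition of $V(G)$ into two nonempty parts with no edge between them. This contradicts the connectivity of $G$, so the inequality is strict.

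No step is a real obstacle; the work was done in Theorem~\ref{thm:lowerclass}. The only point to check is that we use just the necessity direction, namely that equality implies an empty cut. That direction was proved separately for $0<\alpha<1$, via the common maximizer and the symmetry of $ZM$, and for $\alpha=1$, via the explicit trace count. It therefore covers the whole range $0<\alpha\le1$ in the hypothesis, and the positive-semidefiniteness part of the characterization is not needed.
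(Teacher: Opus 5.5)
Your proof is correct and is the paper's argument: the lower bound comes from Theorem~\ref{thm:monotone}, and the necessity direction of Theorem~\ref{thm:lowerclass} shows that equality forces an empty cut between $S$ and $V(G)\setminus S$. That is impossible in a connected graph when both parts are nonempty.
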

\begin{proof}
Every nontrivial vertex partition of a connected graph has a nonempty cut. Apply Theorem~\ref{thm:lowerclass}.
\end{proof}

\subsection{Sharp bounds for the trace norm}
For $d\ge1$ define
\[
 F_\alpha(d)=
 \begin{cases}
 \alpha(d-1)+\sqrt{\alpha^2(d-1)^2+4(1-\alpha)^2d},&0\le\alpha<1/2,\\
 2\alpha d,&1/2\le\alpha\le1,
 \end{cases}
\]
and put $F_\alpha(0)=0$. We first present the sharp single-vertex bound.
\begin{theorem}\label{thm:single}
For every vertex $v$,
\begin{equation}\label{eq:single}
 0\le L^T_\alpha(G;v)\le F_\alpha(d_v).
\end{equation}
For $0\le\alpha<1/2$ and $d_v>0$, equality in the upper inequality $L^T_\alpha(G;v)\le F_\alpha(d_v)$ holds if and only if the component containing $v$ is a star with centre $v$. An isolated vertex has $L^T_\alpha(G;v)=0$. For $\alpha\ge1/2$, every vertex satisfies $L^T_\alpha(G;v)=2\alpha d_v$.
\end{theorem}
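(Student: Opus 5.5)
The lower inequality in \eqref{eq:single} is Theorem~\ref{thm:monotone} with $S=\{v\}$. The case $\alpha\ge1/2$ was settled in that proof: $A_\alpha(G)$ and $A_\alpha(G-v)$ are positive semidefinite, so $L^T_\alpha(G;v)=2\alpha d_v=F_\alpha(d_v)$. An isolated vertex contributes a zero block, so its local energy is $0$. It therefore remains to prove the upper bound for $0\le\alpha<1/2$ and $d=d_v\ge1$, and to characterize equality there.

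\emph{Upper bound.} By Lemma~\ref{lem:embed}, $A_\alpha(G)=B_v+H_v$, where $H_v$ is the embedded $A_\alpha$-matrix of $K_{1,d}$. Since $\norm{B_v}=\T(G-v)$, the triangle inequality gives $L^T_\alpha(G;v)\le\norm{H_v}=\T(K_{1,d})$. So it suffices to compute the spectrum of $A_\alpha(K_{1,d})$.
\begin{itemize}
\item Vectors supported on the leaves and orthogonal to $\mathbf 1$ give the eigenvalue $\alpha$ with multiplicity $d-1$.
\item The quotient on the centre/leaf partition is $\begin{pmatrix}\alpha d&(1-\alpha)d\\ 1-\alpha&\alpha\end{pmatrix}$. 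Its eigenvalues $\lambda_\pm$ have sum $\alpha(d+1)$ and product $\alpha^2d-(1-\alpha)^2d=d(2\alpha-1)<0$.
\end{itemize}
Hence $\lambda_+>0>\lambda_-$, and
\[
\T(K_{1,d})=\alpha(d-1)+(\lambda_+-\lambda_-)=\alpha(d-1)+\sqrt{\alpha^2(d-1)^2+4(1-\alpha)^2d}=F_\alpha(d).
\]
This matches $F_\alpha$, and the same computation confirms sufficiency of the equality condition: if the component of $v$ is $K_{1,d}$ centred at $v$, then $G-v$ is that component's leaves (now isolated) together with the rest, so $L^T_\alpha=\T(K_{1,d})$ by block additivity.

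\emph{Necessity of equality (the main obstacle).} Assume $\norm{B_v+H_v}=\norm{B_v}+\norm{H_v}$. By Lemma~\ref{lem:dual} there is a common maximizer $Z$ with $Zx=\operatorname{sgn}(\lambda)x$ for every eigenvector $x$ of $H_v$ or $B_v$ with $\lambda\neq0$, and with $ZH_v$ and $ZB_v$ positive semidefinite. Since $\alpha<1/2$, every eigenvalue of $H_v$ on its support $N[v]$ is nonzero (for $\alpha=0$ the leaf eigenvalues vanish; handle this separately, or cite \cite{ER}). Thus $Z$ restricted to $\operatorname{span}\{e_u:u\in N[v]\}$ is determined: it is $\operatorname{sgn}(A_\alpha(K_{1,d}))$, embedded, and $N[v]$-space is $Z$-invariant. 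Because $Z$ is symmetric, it also preserves the complement, so $Z=Z_1\oplus Z_2$.

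Now use the conditions coming from $B_v$. Write $B_v$ in the blocks $N(v)$ and $W=V\setminus N[v]$; its row and column at $v$ are zero. The cross block $B_v[N(v),W]$ records the edges from $N(v)$ to $W$, and $B_v[N(v),N(v)]$ records the edges inside $N(v)$. The condition $\operatorname{tr}(ZB_v)=\norm{B_v}$ together with the symmetry of $ZB_v$ forces $Z_1$ to commute with the relevant part of $B_v$. I would compare the explicit $Z_1$, which has off-diagonal centre--leaf entries and nonconstant structure, with these blocks to force them to vanish:
\begin{itemize}
\item the $(N(v),W)$ block of $ZB_v$ is $Z_1|_{N(v)}$ applied to $B_v[N(v),W]$, while its transpose counterpart involves $Z_2$;
\item since $Z_1$ mixes the coordinate $v$ into the leaf coordinates while $B_v$ has a zero row at $v$, the $v$-row of $ZB_v$ is $Z_{v,N(v)}B_v[N(v),\cdot]$, and this must equal the $v$-column of $ZB_v$, which is $0$ because $B_v$ has zero column at $v$.
\end{itemize}
The second bullet is the key step. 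The entries $Z_{vu}$, $u\in N(v)$, are all equal to a constant $c$, and $c\neq0$ because $\lambda_\pm$ have opposite signs. Hence $\mathbf 1_{N(v)}^TB_v[N(v),\cdot]=0$. The entries of $B_v$ on these rows are nonnegative, and the diagonal entries $\alpha(d_u-1)$ are nonnegative too. So every edge at a neighbour other than $uv$ gives a positive contribution, which is impossible for $\alpha>0$ (for $\alpha=0$ the off-diagonal edges alone suffice). Therefore each neighbour has degree $1$, and the component of $v$ is a star centred at $v$.
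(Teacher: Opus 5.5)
Your proof is correct and follows essentially the paper's argument: the triangle inequality against the embedded star $H_v$ with $\norm{H_v}=F_\alpha(d)$, and, for equality, a common maximizer $Z$ whose $v$-row is a nonzero constant on $N(v)$. The vanishing of the $v$-row of $ZB_v$ (you get it from symmetry and $B_ve_v=0$, the paper from positive semidefiniteness and a zero diagonal entry), together with nonnegativity of $B_v$, then kills every neighbour row.

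Your separate treatment of $\alpha=0$ is unnecessary. The key step only uses $Ze_v\in\operatorname{span}\{e_v,w\}$ with $w=d^{-1/2}\sum_{u\sim v}e_u$. This holds for all $0\le\alpha<1/2$, because that plane is spanned by eigenvectors of $H_v$ whose eigenvalues $\lambda_\pm$ are nonzero.
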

\begin{proof}
	Let \(d=d_v>0\). By Lemma \(\ref{lem:embed}\), we can write \(A_\alpha(G)=H_v+B_v\). Consider the \((d-1)\)-dimensional subspace
	\[
	L_v=\left\{x\in\mathbb R^n:x_u=0\text{ for }u\notin N_G(v),\ \sum_{u\sim v}x_u=0\right\}.
	\]
	For every \(x\in L_v\), we have \(H_vx=\alpha x\). Thus this subspace contributes \(d-1\) eigenvalues equal to \(\alpha\). The coordinates outside \(\{v\}\cup N_G(v)\) contribute \(n-d-1\) zero eigenvalues.
%Let $d=d_v>0$. $A_\alpha(G)=H_v+B_v$ as defined in Lemma \ref{lem:embed}. Consider the $(d-1)$-dimensional subspace
%$L_v=\{x: x_v=0, \sum_{u\sim v}x_u=0\}$. For every $x\in L_v$, we have $H_vx=\alpha x$. Hence $\alpha$ is an eigenvalue of $H_v$ with multiplicity $d-1$.
% On the leaf-contrast space, $H_v$ has eigenvalue $\alpha$ with multiplicity $d-1$.
On the span of $e_v$ and $w=d^{-1/2}\sum_{u\sim v}e_u$, its matrix is
\[
 K=\begin{pmatrix}\alpha d&(1-\alpha)\sqrt d\\(1-\alpha)\sqrt d&\alpha\end{pmatrix}.
\]
Its determinant is $d(2\alpha-1)$. For $\alpha<1/2$, its eigenvalues have opposite signs, with absolute sum $\sqrt{\alpha^2(d-1)^2+4(1-\alpha)^2d}$. For $\alpha\ge1/2$, it is positive semidefinite. Hence $\norm{H_v}=F_\alpha(d)$, and the triangle inequality, together with Theorem~\ref{thm:monotone}, proves \eqref{eq:single}.
For the equality characterization take $0\le\alpha<1/2$. Upper equality is equivalent to triangle equality for $A_\alpha(G)=H_v+B_v$. By Lemma~\ref{lem:dual} there is a symmetric contraction $Z$ with $ZH_v=|H_v|$ and $ZB_v$ positive semidefinite. The space spanned by $e_v,w$ is invariant under $Z$, and $Z$ equals $\operatorname{sgn}(K)$ there. Its off-diagonal entry is
\[
 \gamma=\frac{2(1-\alpha)\sqrt d}{\sqrt{\alpha^2(d-1)^2+4(1-\alpha)^2d}}>0.
\]
Thus $Ze_v=\eta e_v+\gamma w$ for some real $\eta$. Since $B_ve_v=0$, the $(v,v)$ entry of the positive semidefinite matrix $ZB_v$ is zero, so its entire $v$th row is zero. Therefore $w^TB_v=0$. All entries of $B_v$ are nonnegative, and all entries of $w$ on the neighbours are strictly positive. Each neighbour row of $B_v$ is consequently zero. No neighbour has an edge in $G-v$, and the component is the required star. The converse follows by additivity of trace norm over components and $\T(K_{1,d}-v)=0$.
Finally, $A_\alpha(G)=(1-\alpha)Q(G)+(2\alpha-1)D(G)$ is positive semidefinite for $\alpha\ge1/2$. Thus $\T(G)=2\alpha m$, proving the last assertion.
\end{proof}
\begin{remark}
At $\alpha=0$, Theorem~\ref{thm:single} recovers the bound and equality characterization of Espinal and Rada \cite{ER}. For $0<\alpha<1/2$ and $d>0$, the exact perturbation bound is strictly smaller than the separate triangle estimate $2\alpha d+2(1-\alpha)\sqrt d$, since
\[
 \sqrt{\alpha^2(d-1)^2+4(1-\alpha)^2d}
 <\alpha(d+1)+2(1-\alpha)\sqrt d.
\]
\end{remark}

Now let $S$ be an independent set of $k\ge1$ vertices sharing a common open neighbourhood $N$ of size $d\ge1$. Set
\[
 F_\alpha(k,d)=
 \begin{cases}
 \alpha(2kd-d-k)+\sqrt{\alpha^2(d-k)^2+4(1-\alpha)^2kd},&\alpha<1/2,\\
 2\alpha kd,&\alpha\ge1/2.
 \end{cases}
\]
\begin{theorem}\label{thm:twins}
Under these assumptions,
\[
 0\le\T(G)-\T(G-S)\le F_\alpha(k,d).
\]
For $0\le\alpha<1/2$, equality in the upper bound holds if and only if $S\cup N$ forms a component isomorphic to $K_{k,d}$, with $S$ as one part. For $\alpha\ge1/2$, the deletion difference is $2\alpha kd$ for every such graph. If $d=0$, the deletion difference is zero.
\end{theorem}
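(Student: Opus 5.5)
The plan is to copy the proof of Theorem~\ref{thm:single}, with the star $K_{1,d}$ replaced by $K_{k,d}$. Let $B_S$ be $A_\alpha(G-S)$ embedded with zero rows and columns at $S$, and put $H_S=A_\alpha(G)-B_S$.

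\textbf{Step 1: identify $H_S$.} Since $S$ is independent and every vertex of $S$ has neighbourhood exactly $N$, the matrix $H_S$ is supported on $S\cup N$. Its diagonal is $\alpha d$ on $S$ and $\alpha k$ on $N$, because each vertex of $N$ loses exactly $k$ neighbours. Its off-diagonal entries are $1-\alpha$ exactly on the pairs $S\times N$. So $H_S$ is the embedded $A_\alpha(K_{k,d})$, as in Lemma~\ref{lem:embed}.

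\textbf{Step 2: spectrum of $H_S$.} The spectrum splits as follows.
\begin{itemize}
\item The zero-sum vectors supported on $S$ give eigenvalue $\alpha d$ with multiplicity $k-1$.
\item The zero-sum vectors supported on $N$ give eigenvalue $\alpha k$ with multiplicity $d-1$.
\item Coordinates outside $S\cup N$ give zeros.
\item On the span of the normalized indicators $s=k^{-1/2}\mathbf 1_S$ and $w=d^{-1/2}\mathbf 1_N$, $H_S$ acts as
\[
K=\begin{pmatrix}\alpha d&(1-\alpha)\sqrt{kd}\\(1-\alpha)\sqrt{kd}&\alpha k\end{pmatrix},
\qquad \det K=kd(2\alpha-1).
\]
\end{itemize}
For $\alpha<1/2$ the two eigenvalues of $K$ have opposite signs. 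Their absolute values sum to $\sqrt{\alpha^2(d-k)^2+4(1-\alpha)^2kd}$. Adding $\alpha d(k-1)+\alpha k(d-1)=\alpha(2kd-d-k)$ gives $\norm{H_S}=F_\alpha(k,d)$. For $\alpha\ge1/2$, $K$ is positive semidefinite, so $\norm{H_S}=\operatorname{tr}H_S=2\alpha kd$.

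\textbf{Step 3: the inequalities.} The triangle inequality for $A_\alpha(G)=H_S+B_S$ gives the upper bound. Theorem~\ref{thm:monotone} with $e(S)=0$ gives the lower bound. For $\alpha\ge1/2$, both $A_\alpha(G)$ and $A_\alpha(G-S)$ are positive semidefinite, as shown in Theorem~\ref{thm:monotone}. Hence the difference is $2\alpha m-2\alpha(m-kd)=2\alpha kd$. If $d=0$, the vertices of $S$ are isolated and block additivity gives a difference of $0$.

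\textbf{Step 4: equality for $0\le\alpha<1/2$.} This is the main step. By Lemma~\ref{lem:dual}, equality gives a symmetric contraction $Z$ with $ZH_S=|H_S|$ and $ZB_S\succeq0$.
\begin{itemize}
\item The span of $s,w$ is an $H_S$-invariant subspace on which all eigenvalues are nonzero. Lemma~\ref{lem:dual} forces $Z$ to act there as $\operatorname{sgn}(K)$.
\item Hence $Zs=\eta s+\gamma w$, where $\gamma$ is the off-diagonal entry of $\operatorname{sgn}(K)$. Since the off-diagonal entry of $K$ is positive, $\gamma>0$.
\item This avoids any difficulty at $\alpha=0$, where the $S$-contrast eigenvalue $\alpha d$ vanishes and $Z$ is not determined on that space. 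We never use $Ze_u$ for individual $u\in S$, only $Zs$.
\item Because $B_Ss=0$, we get $s^T(ZB_S)s=0$. Positive semidefiniteness then gives $s^TZB_S=0$.
\item Since $s^TZ=\eta s^T+\gamma w^T$ and $s^TB_S=0$, this reduces to $\gamma\,w^TB_S=0$, so $w^TB_S=0$.
\item $B_S$ has nonnegative entries and $w$ is strictly positive on $N$. Therefore every row of $B_S$ indexed by $N$ vanishes.
\end{itemize}
So no vertex of $N$ has an edge in $G-S$. In particular $N$ is independent, and $N$ has no neighbours outside $S$. The vertices of $S$ already have neighbourhood exactly $N$, so $S\cup N$ is a component isomorphic to $K_{k,d}$ with $S$ as one part.

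\textbf{Step 5: converse.} Suppose $S\cup N$ is a component isomorphic to $K_{k,d}$ with $S$ as one part. Trace norm is additive over components, and $K_{k,d}-S$ is edgeless with zero trace norm. Hence the difference equals $\norm{A_\alpha(K_{k,d})}=F_\alpha(k,d)$ by the spectrum computed in Step 2.
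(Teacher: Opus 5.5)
Your proposal is correct and follows the paper's proof essentially step for step. It uses the same splitting $A_\alpha(G)=H_S+B_S$, with $H_S$ the embedded $A_\alpha(K_{k,d})$, and the same $2\times2$ quotient on the normalized part indicators. The equality argument is also the same: a common maximizer $Z$ with $Zs=\eta s+\gamma w$, $\gamma>0$, turns $s^TZB_S=0$ into $w^TB_S=0$, and nonnegativity of $B_S$ then forces every row indexed by $N$ to vanish. Your explicit remark that only $Zs$ is used, never $Ze_u$ for $u\in S$, so that the argument survives at $\alpha=0$, is a useful clarification that the paper leaves implicit.
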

\begin{proof}
%Let $B_S$ be the embedded $A_\alpha(G-S)$. 
Let \(B_S\) denote the \(n\times n\) matrix obtained by embedding \(A_\alpha(G-S)\) into the coordinates corresponding to \(V(G)\setminus S\), with zero rows and columns indexed by \(S\).
Then $H_S=A_\alpha(G)-B_S$ is the embedded $A_\alpha(K_{k,d})$. Its spectrum consists of $\alpha d$ repeated $k-1$ times, $\alpha k$ repeated $d-1$ times, $0$ repeated $n-k-d$ times, and the two eigenvalues of
\[
 K_{k,d}=\begin{pmatrix}\alpha d&(1-\alpha)\sqrt{kd}\\(1-\alpha)\sqrt{kd}&\alpha k\end{pmatrix}.
\]
The determinant is $kd(2\alpha-1)$. Calculating its trace norm gives $F_\alpha(k,d)$ and proves the bound.
For upper equality with $0\le\alpha<1/2$, choose a common maximizer $Z$ for $H_S$ and $B_S$. Put $p=k^{-1/2}\sum_{s\in S}e_s$ and $q=d^{-1/2}\sum_{u\in N}e_u$. On their span, $Z=\operatorname{sgn}(K_{k,d})$, so $Zp=\eta p+\gamma q$, where
\[
 \gamma=\frac{2(1-\alpha)\sqrt{kd}}{\sqrt{\alpha^2(d-k)^2+4(1-\alpha)^2kd}}>0.
\] Since $B_Sp=0$, positive semidefiniteness of $ZB_S$ implies $p^TZB_S=0$, whence $q^TB_S=0$. Nonnegativity of $B_S$ forces every row indexed by $N$ to vanish. Thus $N$ has no edges within itself or outside $S\cup N$, giving the stated component. The converse is immediate. Nonnegativity follows from Theorem~\ref{thm:monotone}. The trace formula proves the result for $\alpha\ge1/2$.
\end{proof}

\subsection{A deletion bound for the standard centred energy}
The next theorem applies to the centred definition used in the $A_\alpha$-energy literature and remains meaningful throughout $[0,1]$.
\begin{theorem}\label{thm:center}
Let $n\ge2$, $v$ have degree $d$, and set
\[
 c=\frac{2\alpha m}{n},\quad h=\frac{2\alpha(m-d)}{n-1},\quad s=h-c.
\]
For $d\ge1$ put $x=\alpha d-c$, $y=\alpha+s$, and
\begin{equation}\label{eq:centerbound}
 R_\alpha(n,m,d)=(d-1)|y|+(n-d-1)|s|
 +\max\left\{|x+y|,\sqrt{(x-y)^2+4(1-\alpha)^2d}\right\}.
\end{equation}
For $d=0$ put $R_\alpha(n,m,0)=|c|+(n-1)|s|$. Then
\[
 |L^C_\alpha(G;v)|\le R_\alpha(n,m,d).
\]
The bound is attained for every $\alpha\in[0,1]$ when $G=K_{1,d}$ and $v$ is its centre. At $\alpha=0$, it reduces to $2\sqrt d$.
\end{theorem}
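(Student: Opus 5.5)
The plan mirrors Theorem~\ref{thm:single}: write the centred matrix of $G$ as a perturbation of the embedded centred matrix of $G-v$, compute the perturbation spectrum exactly, and apply the triangle inequality in both directions.

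\textbf{Decomposition.} Let $B_v$ be as in Lemma~\ref{lem:embed} and let $E_v=I_n-e_ve_v^T$. The embedded centred matrix of $G-v$ is $\tilde C=B_v-hE_v$. Its trace norm equals $\E(G-v)$, since the extra zero row and column contribute nothing. We then have
\[
 C_\alpha(G)=A_\alpha(G)-cI_n=\tilde C+\Delta,\qquad \Delta=H_v-cI_n+hE_v=H_v+sI_n-h\,e_ve_v^T .
\]
The triangle inequality for the trace norm gives $\bigl|\E(G)-\E(G-v)\bigr|\le\norm{\Delta}$. So it suffices to show that $\norm{\Delta}=R_\alpha(n,m,d)$.

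\textbf{Spectrum of $\Delta$.} We use the invariant subspaces already identified in the proof of Theorem~\ref{thm:single}.
\begin{itemize}
 \item On the $(d-1)$-dimensional leaf-contrast space $L_v$, the matrix $H_v$ acts as $\alpha$. Hence $\Delta$ has eigenvalue $\alpha+s=y$ with multiplicity $d-1$.
 \item On the $n-d-1$ coordinates outside $\{v\}\cup N_G(v)$, $H_v$ vanishes. Hence $\Delta$ has eigenvalue $s$ with multiplicity $n-d-1$.
 \item On $\operatorname{span}\{e_v,w\}$, the $(v,v)$ entry becomes $\alpha d+s-h=\alpha d-c=x$ and the $(w,w)$ entry becomes $\alpha+s=y$. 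So $\Delta$ acts as
\[
 \begin{pmatrix}x&(1-\alpha)\sqrt d\\(1-\alpha)\sqrt d&y\end{pmatrix}.
\]
\end{itemize}
For a real symmetric $2\times2$ matrix, the absolute eigenvalue sum is $|{\rm tr}|$ when the determinant is nonnegative and $\sqrt{({\rm tr})^2-4\det}$ otherwise. In either case it equals $\max\{|x+y|,\sqrt{(x-y)^2+4(1-\alpha)^2d}\}$: the square root is always at least $|x+y|$ precisely when $\det\le0$, and at most $|x+y|$ when $\det\ge0$. Summing the three contributions gives \eqref{eq:centerbound}.

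For $d=0$ we have $H_v=0$, and $\Delta$ is diagonal with entries $s-h=-c$ at $v$ and $s$ elsewhere. This gives $|c|+(n-1)|s|$. Checking that the decomposition and the bookkeeping of $c$, $h$, $s$ are exactly right is the main step where errors could arise. The remaining steps are routine.

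\textbf{Attainment at the star.} Take $G=K_{1,d}$ with centre $v$, so $n=d+1$ and $m=d$. Then $G-v$ is edgeless, so $h=0$ and $\tilde C=0$. It follows that $C_\alpha(G)=\Delta$, and hence $L^C_\alpha(G;v)=\E(G)=\norm{\Delta}=R_\alpha$, for every $\alpha\in[0,1]$.

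\textbf{Case $\alpha=0$.} Here $c=h=s=0$, $x=y=0$, and the $n-d-1$ term vanishes. The maximum equals $2\sqrt d$, which recovers the bound of Espinal and Rada.
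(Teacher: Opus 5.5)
Your proof is correct and follows the paper's argument: you write $C_\alpha(G)$ as the embedded $C_\alpha(G-v)$ plus a perturbation $\Delta$, read off its spectrum ($y$ on the leaf-contrast space, $s$ off $\{v\}\cup N_G(v)$, and the $2\times2$ block on $\operatorname{span}\{e_v,w\}$), and apply the triangle inequality in both directions, with the star centre attaining equality because the retained centred matrix is zero. Your explicit form $\Delta=H_v+sI_n-h\,e_ve_v^T$ is only a compact way of recording the diagonal entries $x$, $y$, $s$ that the paper lists directly.
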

\begin{proof}
Embed $C_\alpha(G-v)$ as $\widetilde C_v$ with zero at $v$. The difference $\Delta=C_\alpha(G)-\widetilde C_v$ has diagonal $x$ at $v$, $y$ at neighbours, and $s$ elsewhere. Its only off-diagonal entries are $1-\alpha$ along the $d$ incident edges. Its eigenvalues are $y$ with multiplicity $d-1$, $s$ with multiplicity $n-d-1$, and those of
\[
 \begin{pmatrix}x&(1-\alpha)\sqrt d\\(1-\alpha)\sqrt d&y\end{pmatrix}.
\]
For a symmetric two-by-two matrix with diagonal $x,y$ and off-diagonal $b$, the sum of the absolute eigenvalues is $\max\{|x+y|,\sqrt{(x-y)^2+4b^2}\}$. This proves \eqref{eq:centerbound} as an exact expression for $\norm\Delta$. The reverse triangle inequality yields the result. If $d=0$, the difference is diagonal with the stated norm. For a star centre, $G-v$ is edgeless and $\widetilde C_v=0$, proving attainment.
\end{proof}
\begin{remark}
The bound is sharp on the star-centre family. Theorem~\ref{thm:centerclass} below classifies its connected equality graphs. Pointwise optimality for every feasible triple $(n,m,d)$ is not asserted.
\end{remark}

The same calculation applies to independent twins and gives a bound determined by four graph parameters.
\begin{theorem}\label{thm:centertwins}
Let $S$ be an independent set of $k\ge1$ vertices with a common neighbourhood of size $d\ge1$, where $n>k$. Put
\[
 c=\frac{2\alpha m}{n},\qquad h=\frac{2\alpha(m-kd)}{n-k},\qquad
 s=h-c,\qquad x=\alpha d-c,\qquad y=\alpha k+s.
\]
Then
\begin{align}\label{eq:centertwins}
 |\E(G)-\E(G-S)|\le{}&(k-1)|x|+(d-1)|y|+(n-k-d)|s|\nonumber\\
 &+\max\left\{|x+y|,\sqrt{(x-y)^2+4(1-\alpha)^2kd}\right\}.
\end{align}
For every $\alpha\in[0,1]$, equality holds when $G=K_{k,d}$ and $S$ is the part of size $k$.
\end{theorem}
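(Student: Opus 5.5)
The plan mirrors the proof of Theorem~\ref{thm:center}. Embed $C_\alpha(G-S)$ into the full coordinate set as $\widetilde C_S$, with zero rows and columns indexed by $S$, and put $\Delta=C_\alpha(G)-\widetilde C_S$. The reverse triangle inequality gives
\[
 |\E(G)-\E(G-S)|\le\norm\Delta ,
\]
so it suffices to show that $\norm\Delta$ equals the right-hand side of \eqref{eq:centertwins}. The centring constants are $c$ for $G$ and $h=2\alpha(m-kd)/(n-k)$ for $G-S$; the latter is correct because deleting the independent set $S$ removes exactly $kd$ edges.

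First I will read off the entries of $\Delta$ in three blocks.
\begin{itemize}
\item At $s\in S$, the diagonal entry is $\alpha d-c=x$, since $\widetilde C_S$ vanishes there.
\item At $u\in N$, the degree drops by $k$, so the diagonal entry is $\alpha k-c+h=y$.
\item At the remaining $n-k-d$ vertices, the diagonal entry is $h-c=s$.
\end{itemize}
Every off-diagonal entry of $A_\alpha(G-S)$ cancels. The only surviving off-diagonal entries are $1-\alpha$ on the $kd$ edges between $S$ and $N$, so
\[
 \Delta=xI_S\oplus\bigl(yI_N+(1-\alpha)\text{ biadjacency}\bigr)\oplus sI_{\mathrm{rest}},
\]
where the middle block is really the $S\cup N$ block. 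Concretely, $\Delta$ restricted to $S\cup N$ is the embedded $K_{k,d}$ structure with diagonal $x$ on $S$ and $y$ on $N$.

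Next I will decompose the spectrum of $\Delta$ exactly as in Theorem~\ref{thm:twins}. Vectors supported on $S$ with coordinate sum zero are killed by the biadjacency block $J_{N,S}$, which gives eigenvalue $x$ with multiplicity $k-1$. The analogous vectors on $N$ give $y$ with multiplicity $d-1$. The remaining coordinates give $s$ with multiplicity $n-k-d$. On $\operatorname{span}\{p,q\}$, with $p,q$ the normalized indicators of $S$ and $N$, the matrix is
\[
 \begin{pmatrix}x&(1-\alpha)\sqrt{kd}\\(1-\alpha)\sqrt{kd}&y\end{pmatrix},
\]
because $J_{N,S}p=\sqrt{kd}\,q$. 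The two-by-two absolute-sum formula quoted in Theorem~\ref{thm:center} then gives the max term with $b^2=(1-\alpha)^2kd$. Adding the contributions yields \eqref{eq:centertwins} as an exact expression for $\norm\Delta$.

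For equality when $G=K_{k,d}$ and $S$ is the part of size $k$, note that $G-S$ is the edgeless graph on $N$. Since $n>k$, the set $N$ is nonempty, and $h=0$, so $C_\alpha(G-S)=0$. Hence $\widetilde C_S=0$, which gives $\Delta=C_\alpha(G)$ and $\E(G)-\E(G-S)=\E(G)=\norm\Delta$.

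The main obstacle is only bookkeeping. I must check that the $(n-k-d)$ block is genuinely diagonal with entry $s$, which holds because vertices outside $S\cup N$ keep their degrees. I must also check that the vectors of sum zero on $S$ and on $N$ lie in the kernel of the off-diagonal part, and that the case $k=1$ recovers Theorem~\ref{thm:center}.
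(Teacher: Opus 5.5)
Your proposal is correct and follows essentially the same route as the paper: the same embedded difference $\Delta$ with diagonal entries $x$, $y$, $s$ and off-diagonal part $(1-\alpha)A(K_{k,d})$, the same contrast-space and quotient decomposition giving $\norm\Delta$ exactly, the reverse triangle inequality, and the observation that $\widetilde C_S=0$ for $K_{k,d}$. One cosmetic point: your direct-sum display for $\Delta$ is not literally a direct sum, because the biadjacency couples $S$ and $N$, but you correct this immediately in the surrounding text.
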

\begin{proof}
Embed $C_\alpha(G-S)$ with zero rows and columns on $S$. The difference from $C_\alpha(G)$ has constant diagonal $x$ on $S$, $y$ on the common neighbourhood, and $s$ elsewhere. Its only off-diagonal entries form $(1-\alpha)A(K_{k,d})$. The two contrast spaces contribute eigenvalues $x$ and $y$ with multiplicities $k-1$ and $d-1$, respectively. The remaining exterior space contributes $s$ with multiplicity $n-k-d$. On the normalized indicator vectors of the two parts, the difference has matrix
\[
 \begin{pmatrix}x&(1-\alpha)\sqrt{kd}\\(1-\alpha)\sqrt{kd}&y\end{pmatrix}.
\]
Its trace norm gives the right-hand side of \eqref{eq:centertwins}. Apply the reverse triangle inequality. For $K_{k,d}$, the retained graph is edgeless, so its centred matrix is zero and equality follows.
\end{proof}

We now give a complete structural equality classification for the centred bounds on connected graphs. It includes singular perturbations and applies to the absolute deletion difference, not only to its upper branch.
\begin{theorem}\label{thm:centerclass}
Let $G$ be connected, let $0\le\alpha<1$, and let $S$ be an independent set of $k\ge1$ vertices with a common neighbourhood $N$ of size $d\ge1$. Equality in \eqref{eq:centertwins} holds if and only if
\[
 G\cong K_{k,d},\qquad S\text{ is the part of size }k.
\]
In particular, for a vertex $v$ of a connected graph of order at least two,
\[
 |L^C_\alpha(G;v)|=R_\alpha(n,m,d_v)
\]
holds if and only if $G$ is a star and $v$ is its centre. Every such equality lies on the upper branch: the original centred energy exceeds the retained energy.
\end{theorem}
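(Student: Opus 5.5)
The plan is to reduce equality in \eqref{eq:centertwins} to an equality case of the triangle inequality and then use Lemma~\ref{lem:dual}, as in the proofs of Theorems~\ref{thm:single} and~\ref{thm:twins}. Write $C_\alpha(G)=\widetilde C_S+\Delta$, where $\widetilde C_S$ is the embedded $C_\alpha(G-S)$ and $\Delta$ is the difference matrix from the proof of Theorem~\ref{thm:centertwins}. The right-hand side of \eqref{eq:centertwins} is exactly $\norm\Delta$, so equality splits into two branches.

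On the \emph{upper branch}, $\E(G)-\E(G-S)=\norm\Delta$, which is triangle equality for $\widetilde C_S+\Delta$. On the \emph{lower branch}, $\E(G-S)-\E(G)=\norm\Delta$, which is triangle equality for $\widetilde C_S=C_\alpha(G)+(-\Delta)$. In each branch Lemma~\ref{lem:dual} supplies a common maximizing symmetric contraction $Z$. The sufficiency direction and the star specialization ($k=1$, where $K_{1,d}$ is the star and the single-vertex bound is $R_\alpha$) follow from Theorem~\ref{thm:centertwins}. In the equality case $K_{k,d}$ we have $\widetilde C_S=0$ and $\norm\Delta>0$, so the equality lies on the upper branch. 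Hence the work is necessity, together with ruling out the lower branch.

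\textbf{Upper branch, generic case.} Let $p,q$ be the normalized indicators of $S$ and $N$. Suppose the $2\times2$ block
\[
\begin{pmatrix}x&(1-\alpha)\sqrt{kd}\\(1-\alpha)\sqrt{kd}&y\end{pmatrix}
\]
is nonsingular with eigenvalues of opposite signs, that is, $xy<(1-\alpha)^2kd$. Then $Z=\operatorname{sgn}$ of this block on $\operatorname{span}\{p,q\}$, and $Zp=\eta p+\gamma q$ with $\gamma\ne0$ because $\alpha<1$. Since $\widetilde C_Sp=0$ and $Z\widetilde C_S\succeq0$, we get $p^TZ\widetilde C_S=0$, hence $q^T\widetilde C_S=0$.

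$\widetilde C_S$ is not entrywise nonnegative, so I read this identity coordinatewise.
\begin{itemize}
\item[(i)] At a vertex $w\notin S\cup N$ the entry is $(1-\alpha)|N(w)\cap N|$. It must vanish, so $N$ sends no edges outside $S\cup N$. Vertices of $S$ see only $N$, so connectivity gives $V(G)=S\cup N$ and $G-S=G[N]$.
\item[(ii)] At $u\in N$, the identity says $A_\alpha(G[N])\mathbf 1=h\mathbf 1$, so $G[N]$ is $r$-regular with $r=h=2\alpha e(N)/d=\alpha r$. Since $\alpha<1$, this forces $r=0$, so $G\cong K_{k,d}$.
\end{itemize}

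\textbf{Degenerate cases and the lower branch.} The main obstacle is the degenerate case, where the block is singular or has eigenvalues of the same sign, so that $\gamma=0$ and the argument above gives nothing. Here I would first try to show the case cannot occur on an equality graph. With $c=2\alpha m/n$ and $h=2\alpha(m-kd)/(n-k)$ one has $x=\alpha d-c$ and $y=\alpha k+s$. I would aim to prove $xy<(1-\alpha)^2kd$ for every connected $G$ containing such twins, using $m\ge kd$, $n\ge k+d$ and $e(N)\le\binom d2$. The bound $|xy|\lesssim\alpha^2kd$ is plausible for $\alpha<1/2$ but fails near $\alpha=1$.

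If the inequality fails, the fallback uses the remaining invariant subspaces. On the $S$-contrast and $N$-contrast spaces, and on $\operatorname{span}\{p,q\}$, $Z$ acts as $\operatorname{sgn}(x)$, $\operatorname{sgn}(y)$ and $\pm1$ respectively. In particular $Ze_u=\tau e_u$ for $u\in N$ whenever $y\ne0$. Symmetry of $Z\widetilde C_S$ would then force $\widetilde C_S$ to have no entries between $N$ and vertices on which $Z$ has the opposite sign. Combining this with the trace identity $\operatorname{tr}(Z\widetilde C_S)=\E(G-S)$ should again force $N$ to be isolated from the rest of $G-S$.

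For the lower branch, take $Z$ maximizing both $C_\alpha(G)$ and $-\Delta$. The vector $p$ plays the same role through $\widetilde C_S p=0$, now written as $C_\alpha(G)p=\Delta p$, with $ZC_\alpha(G)\succeq0$. I expect the same computation to give a contradiction, since $K_{k,d}$ itself lies strictly on the upper branch. The degenerate-sign case, in both branches, is where I expect the real difficulty.
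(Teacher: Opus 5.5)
Your reduction to the two triangle-equality branches, the sufficiency direction, and the indefinite upper-branch case match the paper's proof. That last case runs from $p^TZ\widetilde C_S=0$ to $q^T\widetilde C_S=0$, then to $W=\varnothing$ and $r=h=\alpha r$. However, the lower branch and the semidefinite upper-branch case are only conjectured, and these are genuine gaps.

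\textbf{Lower branch.} No sign analysis of the $2\times2$ block $K$ is needed, and you already have the ingredients. If $Z$ maximizes both $C_\alpha(G)$ and $-\Delta$, then $Z\widetilde C_S=ZC_\alpha(G)+Z(-\Delta)=|C_\alpha(G)|+|\Delta|$ is a sum of positive semidefinite matrices. Since $\widetilde C_Sp=0$, you get $0=p^T|C_\alpha(G)|p+p^T|\Delta|p$. Hence $p^T|\Delta|p=0$, so $\Delta p=0$. This contradicts $\Delta p=xp+(1-\alpha)\sqrt{kd}\,q\ne0$.

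\textbf{Semidefinite upper-branch case.} Your first plan, proving $xy<(1-\alpha)^2kd$ for every connected $G$, is false. For $P_4$ with $v$ internal, $x=\alpha/2$, $y=\alpha/6$ and $b^2=2(1-\alpha)^2$, so $K$ is singular at $\alpha=2\sqrt6/(1+2\sqrt6)$ and definite above it. Your fallback names the right mechanism: symmetry of $Z\widetilde C_S$ kills blocks between coordinates where $Z$ has opposite signs. But nothing in it shows that such coordinates exist, and the identity $\operatorname{tr}(Z\widetilde C_S)=\E(G-S)$ does not supply this.

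The missing fact is that both centred matrices are traceless. With $W=V(G)\setminus(S\cup N)$, this gives $\operatorname{tr}\Delta=kx+dy+|W|s=0$.
\begin{itemize}
\item If $K$ is semidefinite, then $xy\ge b^2>0$, so $x,y$ share a sign $\sigma$. The trace identity forces $W\ne\varnothing$ and $\operatorname{sgn}s=-\sigma$, whence $Ze_w=-\sigma e_w$ for $w\in W$.
\item If $K$ is definite, then $Z=\sigma I$ on $S\cup N$. The relation $Z\widetilde C_S=\widetilde C_SZ$ forces $\widetilde C_S[N,W]=0$, which disconnects $G$.
\item If $K$ is singular, $Z$ acts on its one-dimensional kernel by a scalar $t$, since all other eigenvalues of $\Delta$ are nonzero. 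If $Zp$ has a nonzero $q$-component, your generic argument gives no $N$--$W$ edges, a contradiction since $W\ne\varnothing$. Otherwise $t=\sigma$, because the nonzero spectral projection of $K$ has a nonzero off-diagonal entry as $b>0$, and the definite-case argument applies.
\end{itemize}
Note also that when $Zp$ has a nonzero $q$-component, your assertion $Ze_u=\tau e_u$ for $u\in N$ fails. So the fallback as written does not cover that subcase.
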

\begin{proof}
Let $A=C_\alpha(G)$, let $B$ be the embedded $C_\alpha(G-S)$, and put $\Delta=A-B$. Retain $x,y,s$ from Theorem~\ref{thm:centertwins}, and set
\[
 p=k^{-1/2}\sum_{u\in S}e_u,\qquad
 q=d^{-1/2}\sum_{u\in N}e_u,\qquad
 W=V(G)\setminus(S\cup N).
\]
On the span of $p,q$, the matrix $\Delta$ is
\[
 K=\begin{pmatrix}x&b\\b&y\end{pmatrix},\qquad
 b=(1-\alpha)\sqrt{kd}>0.
\]
Both $A$ and $B$ are traceless, so $\Delta$ is traceless as well.
First exclude lower-branch equality $\norm B-\norm A=\norm\Delta$. This would give triangle equality in $B=A+(-\Delta)$ and a common maximizer $Z$ with
\[
 ZB=|A|+|\Delta|\succeq0.
\]
Since $Bp=0$, it follows that $p^TZBp=0$. Thus, $p^T|A|p+p^T|\Delta|p=0$ implies that $p^T|\Delta|p=0$. Hence, $\Delta p=0$.
%taking the quadratic form at $p$ forces $p^T|\Delta|p=0$, hence $\Delta p=0$.
But $\Delta p=x p+bq\ne0$. Thus lower-branch equality is impossible.
Suppose upper equality holds. Triangle equality in $A=B+\Delta$ gives a symmetric contraction $Z$ maximizing $B$ and $\Delta$. In particular, $ZB=|B|\succeq0$ and $Z\Delta=|\Delta|$. Since $Bp=0$, the positive-semidefinite matrix $ZB$ annihilates $p$, and
\begin{equation}\label{eq:pZB}
 p^TZB=0.
\end{equation}
If $K$ is indefinite, its two eigenvalues are nonzero. Thus its span is invariant under $Z$, and
\[
 Zp=\eta p+\gamma q,\qquad
 \gamma=\frac{2b}{\sqrt{(x-y)^2+4b^2}}>0.
\]
Equation~\eqref{eq:pZB} then yields $q^TB=0$. For a vertex $w\in W$, the corresponding entry is
\[
 (q^TB)_w=\frac{1-\alpha}{\sqrt d}\,|N_G(w)\cap N|.
\]
It follows that there are no edges between $N$ and $W$. There are also no edges from $S$ to $W$ by hypothesis. Connectivity therefore implies $W=\varnothing$. If $r_u$ is the degree of $u$ in $G[N]$, the entry of $q^TB$ at $u\in N$ is $(r_u-h)/\sqrt d$. Hence $G[N]$ is regular of degree $r=h$. Since $G-S=G[N]$, its centring constant is $h=\alpha r$. As $\alpha<1$, this forces $r=0$, and $G=K_{k,d}$.
It remains to consider a definite or singular semidefinite $K$. Write $\sigma\in\{-1,1\}$ for its nonzero eigenvalue sign. Because $b>0$, both $x$ and $y$ are nonzero with sign $\sigma$. The contrast spaces on $S$ and $N$ also have this sign. The identity
\[
 kx+dy+|W|s=\operatorname{tr}\Delta=0
\]
shows that $W\ne\varnothing$ and $s$ has sign $-\sigma$.
If $K$ is definite, $Z=\sigma I$ on $S\cup N$ and $Z=-\sigma I$ on $W$. Since $ZB$ is symmetric, $ZB=BZ$, so the off-diagonal block $B[N,W]$ is zero. This disconnects $G$, a contradiction.
If $K$ is singular, its zero eigenspace is one-dimensional and all other eigenspaces of $\Delta$ have nonzero eigenvalues. Thus $Z$ acts by a scalar $t\in[-1,1]$ on the zero eigenspace of $K$ and by $\sigma$ on its nonzero eigenspace. In the basis $p,q$, write $Zp=\eta p+\gamma q$. If $\gamma\ne0$, \eqref{eq:pZB} again gives $q^TB=0$, which disconnects $G$ because $W\ne\varnothing$. If $\gamma=0$, then $t=\sigma$: the nonzero spectral projection of $K$ has a nonzero off-diagonal entry because $b>0$. Therefore $Z=\sigma I$ on $S\cup N$, and the preceding commutation argument again disconnects $G$.
The only remaining case is $G=K_{k,d}$. Conversely, for that graph $B=0$, so upper equality follows directly. Taking $k=1$ gives the single-vertex statement.
\end{proof}
\begin{remark}
Connectivity is essential: $K_{k,d}$ together with isolated vertices also attains the centred bound when its part $S$ is deleted, because the retained graph is edgeless. The theorem classifies all connected extremal graphs, rather than asserting the same classification for arbitrary disconnected graphs.
\end{remark}

The endpoint $\alpha=1$ has a different equality mechanism. It admits an exact description in terms of degrees alone.
\begin{proposition}\label{prop:degreeendpoint}
Let $\varnothing\ne S\ne V(G)$, let $H=G-S$, and define
\[
 a_u=d_G(u)-\frac{2m(G)}{|V(G)|},\qquad
 b_u=\begin{cases}
 0,&u\in S,\\
 d_H(u)-\dfrac{2m(H)}{|V(H)|},&u\notin S,
 \end{cases}
 \qquad \delta_u=a_u-b_u.
\]
Then
\[
 |\mathcal E_1(G)-\mathcal E_1(H)|\le\sum_u|\delta_u|.
\]
Upper-branch equality holds if and only if $b_u\delta_u\ge0$ for every $u$. Lower-branch equality holds if and only if $a_u\delta_u\le0$ for every $u$. Thus equality in the absolute bound holds precisely when at least one of these two systems is satisfied. For single vertices and independent twins, the right-hand side is exactly the perturbation bound stated above at $\alpha=1$.
\end{proposition}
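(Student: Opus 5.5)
At $\alpha=1$ every matrix involved is diagonal, so the whole statement reduces to a coordinatewise comparison of real numbers. I would make this reduction first and then read off both equality conditions termwise.

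\textbf{The diagonal reduction.} At $\alpha=1$ we have $A_1(G)=D(G)$, so $C_1(G)=\operatorname{diag}(a_u)$. Hence $\mathcal E_1(G)=\sum_u|a_u|$. Embedding $C_1(H)$ with zeros on $S$ gives $\operatorname{diag}(b_u)$, so $\mathcal E_1(H)=\sum_u|b_u|$. The difference of the two centred matrices is $\operatorname{diag}(\delta_u)$.

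\textbf{The inequality and the upper branch.} The bound is then the scalar reverse triangle inequality summed over $u$:
\[
\Bigl|\sum_u|a_u|-\sum_u|b_u|\Bigr|\le\sum_u\bigl||a_u|-|b_u|\bigr|\le\sum_u|\delta_u|.
\]
Upper-branch equality $\sum_u|a_u|-\sum_u|b_u|=\sum_u|\delta_u|$ says $\sum_u\bigl(|b_u+\delta_u|-|b_u|-|\delta_u|\bigr)=0$. Each summand is $\le0$, so every summand must vanish. The scalar fact $|b+\delta|=|b|+|\delta|$ holds exactly when $b\delta\ge0$. This gives the condition $b_u\delta_u\ge0$ for all $u$.

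\textbf{The lower branch.} Write $b_u=a_u-\delta_u$ and apply the same scalar argument with the roles of the two terms swapped. Equality $\sum_u|b_u|-\sum_u|a_u|=\sum_u|\delta_u|$ holds exactly when $|a_u+(-\delta_u)|=|a_u|+|\delta_u|$ for every $u$. That is, $a_u(-\delta_u)\ge0$, i.e.\ $a_u\delta_u\le0$.

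The absolute bound is attained exactly when one of the two signed equalities holds. This is automatic, since $|E_G-E_H|$ equals one of the two signed differences. Lemma~\ref{lem:dual} could be invoked here, with the sign matrix acting as a diagonal maximizer, but the scalar argument suffices.

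\textbf{Matching the earlier bounds.} The last step is bookkeeping. For twins at $\alpha=1$, $c=2m/n$ and $h=2(m-kd)/(n-k)$. At $u\in S$, $\delta_u=d-c=x$. At $u\in N$, $d_G(u)-d_H(u)=k$, so $\delta_u=k-c+h=y$. At $u\in W$, $\delta_u=-c+h=s$. Hence
\[
\sum_u|\delta_u|=k|x|+d|y|+(n-k-d)|s|.
\]
For comparison, the right side of \eqref{eq:centertwins} at $\alpha=1$ has off-diagonal term $0$. Its max term is then $\max\{|x+y|,|x-y|\}=|x|+|y|$, which gives the same total. Setting $k=1$ recovers \eqref{eq:centerbound}, and the $d=0$ case matches $|c|+(n-1)|s|$ directly.

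The only potential obstacle is this case bookkeeping, in particular checking the identity $\max\{|x+y|,|x-y|\}=|x|+|y|$. The identity itself is elementary.
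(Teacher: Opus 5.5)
Your proposal is correct and follows essentially the same route as the paper: at $\alpha=1$ all three matrices are diagonal, so the bound is a sum of coordinatewise scalar triangle inequalities, and equality in each branch forces termwise equality, giving $b_u\delta_u\ge0$ for the upper branch and $a_u\delta_u\le0$ for the lower branch. You also explicitly check that $\sum_u|\delta_u|=k|x|+d|y|+(n-k-d)|s|$ agrees with \eqref{eq:centertwins} and \eqref{eq:centerbound} via $\max\{|x+y|,|x-y|\}=|x|+|y|$, which usefully fills in bookkeeping the paper leaves implicit.
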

\begin{proof}
At $\alpha=1$, the three matrices are diagonal with entries $a_u,b_u,\delta_u$, respectively. Equality in the sum of the scalar triangle inequalities $|b_u+\delta_u|\le|b_u|+|\delta_u|$ requires and is implied by $b_u\delta_u\ge0$ at every coordinate. Apply the same argument to $b_u=a_u-\delta_u$ for the lower branch.
\end{proof}

\subsection{Bounds for total local energy}

For $\bar d=2m/n$ define the degree deviation
\[
 \operatorname{dev}(G)=\sum_{v\in V(G)}|d_v-\bar d|.
\]
\begin{theorem}\label{thm:total}
For every graph of order $n\ge2$,
\begin{align}
 0\le\ell^T_\alpha(G)&\le2\T(G),\label{eq:totalT}\\
 \ell^C_\alpha(G)&\le2\E(G)+\frac{2\alpha}{n-1}\operatorname{dev}(G).\label{eq:totalC}
\end{align}
For $\alpha\ge1/2$, the upper bound in \eqref{eq:totalT} is attained for every graph: $\ell^T_\alpha(G)=2\T(G)$. The second reduces to $\ell^C_\alpha(G)\le2\E(G)$ for regular graphs or $\alpha=0$. No constant $q<1$ can replace the additional term in \eqref{eq:totalC} by $q\,2\alpha\operatorname{dev}(G)/(n-1)$ uniformly over all graphs and all $\alpha\in[0,1]$. Equality with positive deviation occurs at $\alpha=1$ for $K_2\cup K_1$.
\end{theorem}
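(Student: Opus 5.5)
The plan is to sum the embedded deletion matrices of Lemma~\ref{lem:embed} and apply the triangle inequality for the trace norm. For the centred energy, the same summation should produce an explicit diagonal correction, and that correction is the source of the degree-deviation term.

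\emph{Trace norm.} The lower bound $\ell^T_\alpha(G)\ge0$ follows by summing Theorem~\ref{thm:monotone} over single vertices. For the upper bound, note that $\T(G-v)=\norm{B_v}$. Identity \eqref{eq:sum} and the triangle inequality give
\[
 \sum_v\T(G-v)\ge\Bigl\|\sum_vB_v\Bigr\|_*=(n-2)\T(G).
\]
Hence $\ell^T_\alpha(G)=n\T(G)-\sum_v\T(G-v)\le2\T(G)$. For $\alpha\ge1/2$, Theorem~\ref{thm:single} gives $L^T_\alpha(G;v)=2\alpha d_v$. Summing yields $4\alpha m$, which equals $2\T(G)$ because $A_\alpha(G)$ is positive semidefinite and so $\T(G)=2\alpha m$.

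\emph{Centred energy.} Let $h_v=2\alpha(m-d_v)/(n-1)$ be the centring constant of $G-v$. Embed $C_\alpha(G-v)$ as $\widetilde C_v=B_v-h_v(I-e_ve_v^T)$, so that $\norm{\widetilde C_v}=\E(G-v)$. Summing and using \eqref{eq:sum},
\[
 \sum_v\widetilde C_v=(n-2)A_\alpha(G)-\Bigl(\sum_vh_v\Bigr)I+\operatorname{diag}(h_u),
\qquad
 \sum_vh_v=\frac{2\alpha m(n-2)}{n-1}.
\]
Subtract $(n-2)C_\alpha(G)=(n-2)A_\alpha(G)-(n-2)c_GI$. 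The result is diagonal, with entry at $u$ equal to
\[
 \frac{2\alpha(m-d_u)}{n-1}-\frac{2\alpha m(n-2)}{n-1}+\frac{2\alpha m(n-2)}{n}.
\]
The $m$-terms combine to $2\alpha m\cdot\frac{2}{n(n-1)}=\frac{2\alpha\bar d}{n-1}$. The entry is therefore $-\frac{2\alpha}{n-1}(d_u-\bar d)$, and
\[
 \sum_v\widetilde C_v=(n-2)C_\alpha(G)-\frac{2\alpha}{n-1}\operatorname{diag}(d_u-\bar d).
\]
The triangle inequality then gives
\[
 \sum_v\E(G-v)\ge(n-2)\E(G)-\frac{2\alpha}{n-1}\operatorname{dev}(G).
\]
This rearranges to \eqref{eq:totalC}. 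If $G$ is regular or $\alpha=0$, the correction term vanishes, which gives the reduction to $\ell^C_\alpha(G)\le2\E(G)$. This bookkeeping of centring constants is the only delicate step, and I expect it to be the main obstacle.

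\emph{Sharpness.} Take $G=K_2\cup K_1$ at $\alpha=1$. Then $n=3$, $m=1$, $\bar d=2/3$, the degrees are $1,1,0$, and $\operatorname{dev}(G)=4/3$. At $\alpha=1$ the centred matrix is $\operatorname{diag}(d_u-\bar d)$, so $\mathcal E_1(G)=4/3$. Every vertex-deleted subgraph, either $2K_1$ or $K_2$, is regular and has zero centred energy. Hence each local energy is $4/3$ and $\ell^C_1(G)=4$. The right-hand side of \eqref{eq:totalC} is $8/3+\frac{2}{2}\cdot\frac43=4$, so equality holds with positive deviation. Replacing the extra term by $q\cdot 2\alpha\operatorname{dev}(G)/(n-1)$ with $q<1$ would make the right-hand side strictly less than $4$ for this graph, a contradiction. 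So no uniform constant $q<1$ is possible.
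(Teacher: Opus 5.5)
Your proof is correct and takes essentially the same route as the paper. You sum the embedded deletion matrices via \eqref{eq:sum} and derive the centred identity $\sum_v\widetilde C_v=(n-2)C_\alpha(G)-\frac{2\alpha}{n-1}\bigl(D(G)-\bar dI_n\bigr)$; your global bookkeeping of the $h_v$ is just the paper's per-coordinate sum $\sum_{v\ne u}c_{G-v}$ written differently. Then, as the paper does, you apply the triangle inequality and use $K_2\cup K_1$ at $\alpha=1$ for sharpness.
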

\begin{proof}
By \eqref{eq:sum}, $(n-2)\T(G)\le\sum_v\T(G-v)$, proving the upper bound in \eqref{eq:totalT}; its lower bound follows from Theorem~\ref{thm:monotone}. For the centred matrices, at a fixed retained vertex $u$,
\[
 \sum_{v\ne u}c_{G-v}
 =\frac{2\alpha}{n-1}\bigl((n-3)m+d_u\bigr).
\]
Therefore the exact embedded summation identity is
\begin{equation}\label{eq:centeridentity}
 \sum_v\widetilde C_v=(n-2)C_\alpha(G)
 -\frac{2\alpha}{n-1}\bigl(D(G)-\bar dI_n\bigr).
\end{equation}
The triangle inequality gives
\[
 (n-2)\E(G)\le\sum_v\E(G-v)
 +\frac{2\alpha}{n-1}\operatorname{dev}(G),
\]
Rearranging yields \eqref{eq:totalC}. The trace formula proves the stated trace-norm equality.
For $G=K_2\cup K_1$ and $\alpha=1$, the degrees are $(1,1,0)$, $\E(G)=4/3$, and each deletion leaves a regular graph, hence energy zero. Thus $\ell^C_1(G)=4$, $\operatorname{dev}(G)=4/3$, and both sides of \eqref{eq:totalC} equal $4$. If the additional term were replaced by $q$ times itself, with $q<1$, the right-hand side would become
\[
\frac{8}{3}+q\frac{4}{3}<4,
\]
contradicting the actual value $\ell_1^C(G)=4$. This proves the stated uniform necessity of the degree deviation term.
\end{proof}
\begin{remark}
The sharpness assertion is uniform over the full parameter range. It does not assert that the same coefficient is optimal for every fixed $0<\alpha<1$ and every fixed order.
\end{remark}

\begin{corollary}\label{cor:energyonly}
For $n\ge2$,
\[
 \ell^C_\alpha(G)\le\left(2+\frac{2}{n-1}\right)\E(G).
\]
The multiplicative coefficient is attained for $\alpha=1$ and $G=K_2\cup K_1$.
\end{corollary}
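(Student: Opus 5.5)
The plan is to deduce the corollary from \eqref{eq:totalC} in Theorem~\ref{thm:total} by absorbing the degree-deviation term into the energy. It therefore suffices to prove
\[
 \alpha\operatorname{dev}(G)\le\E(G),
\]
because substituting this into \eqref{eq:totalC} gives $\ell^C_\alpha(G)\le 2\E(G)+\frac{2}{n-1}\E(G)$, which is the stated inequality.

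To prove $\alpha\operatorname{dev}(G)\le\E(G)$, look at the diagonal of the centred matrix. Its entries are
\[
 C_\alpha(G)_{vv}=\alpha d_v-c_G=\alpha(d_v-\bar d).
\]
The trace norm of a real symmetric matrix dominates the sum of the absolute values of its diagonal entries. I would verify this with Lemma~\ref{lem:dual}. Take $Z=\operatorname{diag}\bigl(\operatorname{sgn}(d_v-\bar d)\bigr)$, which is a real symmetric contraction. Then
\[
 \E(G)=\norm{C_\alpha(G)}\ge\operatorname{tr}\bigl(ZC_\alpha(G)\bigr)=\alpha\sum_v|d_v-\bar d|=\alpha\operatorname{dev}(G).
\]
Equivalently, pinching to the diagonal is contractive for the trace norm, as already used in Theorem~\ref{thm:monotone}. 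This holds for every $\alpha\in[0,1]$, with no case split needed.

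For attainment, take $\alpha=1$ and $G=K_2\cup K_1$, using the values computed in the proof of Theorem~\ref{thm:total}. There $\E(G)=4/3$ and $\ell^C_1(G)=4$. With $n=3$,
\[
 \left(2+\frac{2}{n-1}\right)\E(G)=3\cdot\frac43=4,
\]
so equality holds. No step is a real obstacle; the only idea needed is the diagonal bound on the trace norm.
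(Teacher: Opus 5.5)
Your proposal is correct and matches the paper's proof: the paper also obtains $\alpha\operatorname{dev}(G)\le\E(G)$ by diagonal pinching of $C_\alpha(G)$, substitutes this into \eqref{eq:totalC}, and uses the $K_2\cup K_1$, $\alpha=1$ example for equality. Your choice $Z=\operatorname{diag}(\operatorname{sgn}(d_v-\bar d))$ in Lemma~\ref{lem:dual} is simply an explicit way of carrying out that pinching step.
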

\begin{proof}
Diagonal pinching of $C_\alpha(G)$ gives $\alpha\operatorname{dev}(G)\le\E(G)$. Apply Theorem~\ref{thm:total}. The example in its proof gives equality.
\end{proof}
\begin{remark}
 Theorem~\ref{thm:total} extends the Laplacian degree-deviation estimate of Moon and Park \cite[Theorem 2]{MP} to the present setting. If $G$ is bipartite, $Q(G)$ and $L(G)$ are diagonally similar. Since $A_{1/2}(G)=Q(G)/2$, one has $\mathcal E_{1/2}(G)=LE(G)/2$, and the same relation holds for every induced deletion. Multiplying \eqref{eq:totalC} by two therefore recovers their total local Laplacian-energy bound on bipartite graphs.
\end{remark}

The total centred-energy estimate has a further structural restriction on its equality cases.
\begin{theorem}\label{thm:totalstrict}
If $G$ is connected and nonregular and $0<\alpha<1$, then
\[
 \ell^C_\alpha(G)<2\E(G)+\frac{2\alpha}{n-1}\operatorname{dev}(G).
\]
Consequently, any connected equality graph in this parameter range must be regular.
\end{theorem}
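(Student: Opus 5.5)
The plan is to analyse the equality case of the triangle inequality that produced \eqref{eq:totalC}, and to show that it forces a sign matrix which is constant on a connected graph. This is incompatible with the presence of vertices of degree both above and below $\bar d$.

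\textbf{Step 1: a common maximizer.} Write $E=\frac{2\alpha}{n-1}\bigl(D(G)-\bar dI_n\bigr)$. By \eqref{eq:centeridentity},
\[
 (n-2)C_\alpha(G)=\sum_v\widetilde C_v+E .
\]
Suppose equality holds in \eqref{eq:totalC}. Then $\norm{\sum_v\widetilde C_v+E}=\sum_v\norm{\widetilde C_v}+\norm E$.

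The argument of Lemma~\ref{lem:dual} extends verbatim to finitely many summands. Take a maximizer $Z$ for the sum. Each term satisfies $\operatorname{tr}(ZX_i)\le\norm{X_i}$, so equality of the totals forces $Z$ to maximize every summand. Hence a single symmetric contraction $Z$ maximizes each $\widetilde C_v$ and $E$. Consequently it also maximizes $(n-2)C_\alpha(G)$, so $Z$ maximizes $C_\alpha(G)$ when $n\ge3$. (The case $n=2$ does not occur, since a connected graph on two vertices is regular.)

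By Lemma~\ref{lem:dual}, every product $ZX$ with $X$ one of these matrices is symmetric, so $ZX=XZ$. In particular $Z$ commutes with each $\Delta_v=C_\alpha(G)-\widetilde C_v$.

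\textbf{Step 2: signs on the diagonal part.} Since $0<\alpha$, the diagonal matrix $E$ has nonzero entries exactly at the vertices with $d_u\ne\bar d$. By the eigenvector clause of Lemma~\ref{lem:dual}, $Ze_u=e_u$ whenever $d_u>\bar d$ and $Ze_u=-e_u$ whenever $d_u<\bar d$. Because $G$ is nonregular, both kinds of vertex exist.

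\textbf{Step 3: propagation along edges.} This is the key step. Fix $u$ with $Ze_u=\varepsilon e_u$, where $\varepsilon=\pm1$, and let $v$ be a neighbour of $u$. From the proof of Theorem~\ref{thm:center}, $\Delta_v$ has diagonal entry $y=\alpha+s$ at $u$, and its only off-diagonal entries are $1-\alpha$ along edges at $v$. Hence
\[
 \Delta_ve_u=y\,e_u+(1-\alpha)e_v .
\]
Apply the commutation $Z\Delta_v=\Delta_vZ$ to $e_u$:
\[
 \varepsilon\bigl(y e_u+(1-\alpha)e_v\bigr)=\Delta_v(\varepsilon e_u)=Z\Delta_ve_u=\varepsilon y e_u+(1-\alpha)Ze_v .
\]
Since $\alpha<1$, this gives $Ze_v=\varepsilon e_v$. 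Thus the property ``$e_u$ is an eigenvector of $Z$ with eigenvalue $\varepsilon$'' passes from a vertex to each of its neighbours.

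Start from a vertex of degree above $\bar d$. By connectivity, $Ze_w=e_w$ for every vertex $w$. This contradicts $Ze_w=-e_w$ at a vertex of degree below $\bar d$. Therefore the inequality is strict, and any connected equality graph with $0<\alpha<1$ must be regular.

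\textbf{Main obstacle.} The delicate point is Step 1. One must check that the multi-summand triangle equality genuinely yields one $Z$ maximizing every $\widetilde C_v$ and $E$ simultaneously, and not merely the pairwise statement of Lemma~\ref{lem:dual}. One must also check that this $Z$ commutes with each $\Delta_v$. The commutation holds because $Z$ commutes with both $C_\alpha(G)$ and $\widetilde C_v$, each of which it maximizes.

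The hypotheses are used at two separate places. The condition $\alpha>0$ makes $E$ nonzero, which pins down $Z$ on the nonaverage-degree vertices in Step 2. The condition $\alpha<1$ makes the edge entries $1-\alpha$ nonzero, which drives the propagation in Step 3.
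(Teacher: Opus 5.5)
Your proof is correct, but it reaches the contradiction by a different mechanism than the paper. Both arguments start from a common maximizer $Z$. Both use the diagonal term ($R$ in the paper, your $E$) to pin $Ze_w=-e_w$ at a vertex with $d_w<\bar d$.

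The paper then uses a single deletion. It chooses $v\ne w$ with $G-v$ connected (a leaf of a spanning tree). The strictly positive Perron vector $z$ of $A_\alpha(G-v)$ is an eigenvector of the nonzero traceless $\widetilde C_v$ for a positive eigenvalue, so $Zz=z$. Hence $z_w=(Ze_w)^Tz=-z_w$, contradicting $z_w>0$.

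You instead use that $Z$ also maximizes $(n-2)C_\alpha(G)$, so it commutes with every perturbation $\Delta_v=C_\alpha(G)-\widetilde C_v$. The explicit column $\Delta_ve_u=ye_u+(1-\alpha)e_v$ then carries the relation $Ze_u=\varepsilon e_u$ across each edge.

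The paper's route needs only $R$ and one $\widetilde C_v$ to be maximized simultaneously, at the price of invoking Perron--Frobenius and the existence of a non-cut vertex. Yours is purely elementary and uses the whole family of deletions together with $C_\alpha(G)$. It also gives a stronger by-product: for connected $G$ and $\alpha<1$, any common maximizer having one coordinate vector as a $\pm1$ eigenvector must equal $\pm I$.
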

\begin{proof}
Such a graph has $n\ge3$. Set $R=2\alpha(D-\bar d I)/(n-1)$, so \eqref{eq:centeridentity} reads $(n-2)C_\alpha(G)=\sum_v\widetilde C_v+R$. Equality in the estimate would require a symmetric contraction $Z$ maximizing every $\widetilde C_v$ and $R$ simultaneously. Choose a vertex $w$ with $d_w<\bar d$. Since the corresponding diagonal entry of $R$ is negative, $Ze_w=-e_w$.

There is a vertex $v\ne w$ for which $G-v$ is connected: choose a leaf other than $w$ of a spanning tree of $G$. The graph $G-v$ has at least two vertices. Its centred matrix is nonzero and traceless, so its largest eigenvalue is strictly positive. Since $1-\alpha>0$, a Perron eigenvector of $A_\alpha(G-v)$ is strictly positive and is also an eigenvector for the largest eigenvalue of its centred matrix. Embed this vector as $z$ with zero at $v$. The maximizing property gives $Zz=z$, whereas $z_w>0$. Symmetry of $Z$ now implies
\[
 z_w=e_w^TZz=(Ze_w)^Tz=-z_w,
\]
a contradiction.
\end{proof}

At $\alpha=1$, equality in the total estimate can be decided completely from the degree sequence and adjacency incidences.
\begin{theorem}\label{thm:totalendpoint}
Let $n\ge2$ and put
\[
 a_u=d_u-\bar d,\qquad
 b_{uv}=d_u-\mathbf1_{\{u\sim v\}}-\frac{2(m-d_v)}{n-1};\quad u\ne v,\ \mathbf1_{\{u\sim v\}}=\begin{cases}
 	1,&u\sim v, \\
 	0,&u\nsim v. \end{cases}
\]
Equality
\[
 \ell^C_1(G)=2\mathcal E_1(G)+\frac{2}{n-1}\operatorname{dev}(G)
\]
holds if and only if, for each vertex $u$, the following conditions hold:
\begin{enumerate}
 \item if $a_u\ne0$, then $a_u b_{uv}\ge0$ for all $v\ne u$;
 \item if $a_u=0$, then $b_{uv}=0$ for all $v\ne u$.
\end{enumerate}
In particular, among regular graphs, equality holds precisely for complete graphs and edgeless graphs.
\end{theorem}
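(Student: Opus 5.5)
At $\alpha=1$ all the matrices are diagonal, so the plan is to turn the equality in the total estimate into a family of scalar triangle equalities, exactly as in Proposition~\ref{prop:degreeendpoint}. Here $C_1(G)=\operatorname{diag}(a_u)$. The embedded matrix $\widetilde C_v$ has diagonal entry $b_{uv}$ at $u\ne v$ and $0$ at $v$, because $d_{G-v}(u)=d_u-\mathbf 1_{\{u\sim v\}}$ and $c_{G-v}=2(m-d_v)/(n-1)$. The correction matrix is $R=\frac{2}{n-1}\operatorname{diag}(a_u)$. Identity \eqref{eq:centeridentity} then reads coordinatewise
\[
 (n-2)a_u=\sum_{v\ne u}b_{uv}+\frac{2}{n-1}a_u .
\]
The proof of Theorem~\ref{thm:total} shows that the equality is equivalent to
\[
 \Bigl\|\textstyle\sum_v\widetilde C_v+R\Bigr\|_*=\sum_v\norm{\widetilde C_v}+\norm R .
\]
Since every matrix involved is diagonal, this holds iff for each $u$ we have $\bigl|\sum_{v\ne u}b_{uv}+\tfrac{2}{n-1}a_u\bigr|=\sum_{v\ne u}|b_{uv}|+\tfrac{2}{n-1}|a_u|$. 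That in turn holds iff the reals $b_{uv}$ ($v\ne u$) and $a_u$ all share a weak sign.

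Next I convert this sign condition into the two stated cases. If $a_u\ne0$, common weak sign means exactly $a_ub_{uv}\ge0$ for all $v$. If $a_u=0$, the identity gives $\sum_{v\ne u}b_{uv}=0$, and a common weak sign forces each $b_{uv}=0$. Conversely, condition (1) or (2) clearly yields a common weak sign. This settles the main equivalence.

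For the regular case, $a_u=0$ for every $u$, so equality iff $b_{uv}=0$ for all $u\ne v$. With $d_u=r$ and $m=nr/2$, we get $\frac{2(m-r)}{n-1}=\frac{r(n-2)}{n-1}$, so $b_{uv}=\frac{r}{n-1}-\mathbf 1_{\{u\sim v\}}$. Requiring this to vanish for every pair means each pair is adjacent with $r=n-1$, or nonadjacent with $r=0$. Assuming $n\ge2$, some pair exists, and all pairs must behave alike. This forces $r=0$ (edgeless) or $r=n-1$ (complete), and both plainly satisfy the condition.

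The step needing care is the first one: justifying that equality in the total estimate is precisely equality in this single multi-term triangle inequality. I would re-derive it directly from the chain $(n-2)\mathcal E_1(G)\le\sum_v\mathcal E_1(G-v)+\norm R$ used in Theorem~\ref{thm:total}, rather than through a pairwise version. I would also check the degenerate case $n=2$, where the coefficient $n-2$ vanishes but the argument is unchanged.
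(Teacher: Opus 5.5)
Your proposal is correct and follows the paper's own argument. You read \eqref{eq:centeridentity} coordinatewise at $\alpha=1$, apply the scalar triangle equality (a weak sign shared with $a_u$), and use the zero-sum identity to force $b_{uv}=0$ when $a_u=0$. In the regular case you compute $b_{uv}=r/(n-1)-\mathbf 1_{\{u\sim v\}}$ explicitly, whereas the paper observes that for $0<r<n-1$ the graph $G-v$ has retained degrees $r-1$ and $r$; the two arguments are equivalent.
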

\begin{proof}
All matrices in \eqref{eq:centeridentity} are diagonal when $\alpha =1$. The $u$th coordinate of the decomposition is
\[
 (n-2)a_u=\sum_{v\ne u}b_{uv}+\frac{2}{n-1}a_u.
\]
The total bound is the sum of the scalar triangle inequalities at these coordinates. If $a_u\ne0$, the additional summand has the sign of $a_u$, and equality requires every other summand to have that same weak sign. If $a_u=0$, the sum is zero and equality requires every summand to vanish. This proves the criterion, including $n=2$.

For a regular graph of degree $r$, $a_u=0$ for every $u$, so equality requires every vertex-deleted graph to be regular. If $0<r<n-1$, each vertex $v$ has both a neighbour and a nonneighbour among the retained vertices. Their retained degrees are $r-1$ and $r$, respectively, so $G-v$ is not regular. Thus only $r=0$ and $r=n-1$ are possible. Both edgeless and complete graphs satisfy the criterion.
\end{proof}

\subsection{Why adjacency-energy properties do not transfer verbatim}
\begin{proposition}\label{prop:obstructions}
Centred deletion-based local energy can be negative, can be positive at an isolated vertex, and can violate $\ell^C_\alpha(G)\le2\E(G)$.
\end{proposition}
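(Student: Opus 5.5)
The plan is to exhibit explicit small graphs at the endpoint $\alpha=1$. There $C_1(G)=D(G)-\bar d I_n$ is diagonal, so $\mathcal E_1(G)=\operatorname{dev}(G)$ and every quantity reduces to arithmetic on degree sequences. Each of the three claims then needs only one example, and each computation is a finite list of degrees.

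For negativity, I would take $G=C_4$. It is $2$-regular, so $\mathcal E_1(C_4)=0$. Deleting any vertex leaves $P_3$, with degrees $(1,2,1)$ and mean $4/3$. Hence $\mathcal E_1(P_3)=\tfrac13+\tfrac23+\tfrac13=\tfrac43$, and $L^C_1(C_4;v)=-\tfrac43<0$. Summing over the four vertices also gives $\ell^C_1(C_4)=-\tfrac{16}3$, which illustrates the same phenomenon for the total. This contrasts with Theorem~\ref{thm:monotone}, because the centring constant changes under deletion.

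For an isolated vertex, I would use $G=K_2\cup K_1$, the graph already appearing in the proof of Theorem~\ref{thm:total}. Its degrees are $(1,1,0)$ with mean $2/3$, so $\mathcal E_1(G)=\tfrac43$. Deleting the isolated vertex leaves the regular graph $K_2$, whose centred energy is $0$. Thus $L^C_1(G;v)=\tfrac43>0$ although $v$ is isolated, in contrast with the trace-norm statement of Theorem~\ref{thm:monotone}. The same graph settles the third claim. By the computation in the proof of Theorem~\ref{thm:total}, every vertex-deleted subgraph is regular, so $\ell^C_1(G)=3\cdot\tfrac43=4$. This exceeds $2\mathcal E_1(G)=\tfrac83$, so $\ell^C_\alpha(G)\le2\E(G)$ fails.

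The only point needing care is whether the proposition should hold for parameters other than $\alpha=1$. If so, I would use continuity. The eigenvalues of $C_\alpha(G)$ and of $C_\alpha(G-v)$ depend continuously on $\alpha$, so $\E$, $L^C_\alpha$ and $\ell^C_\alpha$ are continuous in $\alpha$. Each strict inequality above therefore persists for all $\alpha$ in a neighbourhood of $1$, in particular for some $0<\alpha<1$. The only mild obstacle is stating this continuity cleanly; the computations themselves are routine.
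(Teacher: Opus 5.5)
Your proposal is correct and follows the paper's proof: the same reduction at $\alpha=1$ to degree deviations, $C_4$ for negativity, and $K_2\cup K_1$ for both the isolated-vertex claim and the violation of $\ell^C_\alpha(G)\le2\E(G)$. Your continuity argument for $\alpha$ slightly below $1$ matches the remark the paper makes immediately after the proposition.
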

\begin{proof}
At $\alpha=1$, $\E(G)=\sum_v|d_v-\bar d|$. For $G=C_4$, $\E(G)=0$, whereas $G-v=P_3$ has degree sequence $(1,2,1)$ and energy $4/3$. Thus $L^C_1(C_4;v)=-4/3$ for every vertex. For $K_2\cup K_1$, deleting the isolated vertex leaves the regular graph $K_2$, so the isolated vertex has local energy $4/3$. The same graph gives $\ell^C_1=4>8/3=2\E$, as calculated above.
\end{proof}
Because energies depend continuously on $\alpha$, the strict negative change for $C_4$ and the strict total-energy violation for $K_2\cup K_1$ also persist on nonempty intervals immediately below $\alpha=1$. They are not merely artifacts of an isolated endpoint.

For completeness, positive semidefiniteness gives the exact trace-norm formula $\T(G)=2\alpha m$ for $\alpha\ge1/2$. The graph-dependent threshold can be expressed through normalized adjacency. If $G$ has at least one edge, then on the nonisolated vertices,
\[
 A_\alpha(G)=D^{1/2}\bigl(\alpha I+(1-\alpha)D^{-1/2}AD^{-1/2}\bigr)D^{1/2}.
\]
Consequently, if $r_{\min}$ is the smallest normalized-adjacency eigenvalue, the threshold is $-r_{\min}/(1-r_{\min})$. These are supporting facts from the positive-semidefiniteness theory; see \cite{NR}. They do not imply that the centred energy is constant or trivial in that range.

\section{Numerical examples and reproducibility}
The following values were calculated by direct symmetric eigensolvers. Vertex labels for $P_4$ are consecutive along the path, and the deleted vertex is internal. In $K_{2,3}$ the deleted vertex belongs to the part of size two.
\begin{center}
\begin{tabular}{lrrrr}
\toprule
Graph and deleted vertex&$\alpha$&$L^T$&$L^C$&$R_\alpha$\\\midrule
$K_{1,4}$, centre&0.25&3.842329&3.542329&3.542329\\
$K_{1,4}$, centre&0.75&6.000000&3.812214&3.812214\\
$K_{2,3}$, part of size 2&0.40&2.739374&0.899374&2.528061\\
$K_{2,3}$, part of size 2&1.00&6.000000&$-0.600000$&1.800000\\
$K_3$, any vertex&0.25&1.000000&1.500000&2.121320\\
$P_4$, internal vertex&0.40&1.816553&1.311936&2.102286\\\bottomrule
\end{tabular}
\end{center}
The equality classifications distinguish component separation from positivity of the retained spectral quantities. For example, take $G=K_3\cup K_2$ and let $S$ be the triangle component. The cut is empty, but
\[
 A_\alpha(K_3)\succeq0\quad\Longleftrightarrow\quad\alpha\ge\frac13.
\]
Thus equality in Theorem~\ref{thm:lowerclass} holds at $\alpha=0.4$ and fails at $\alpha=0.25$. At the latter value the deletion difference is $5/2$, whereas the lower bound is $3/2$.

Singular perturbations are also covered by the centred classification. If $G=P_4$ and $v$ is an internal vertex, then $x=\alpha/2$ and $y=\alpha/6$. The quotient in Theorem~\ref{thm:center} is singular at
\[
 \alpha=\frac{2\sqrt6}{1+2\sqrt6}.
\]
Nevertheless, the centred deletion bound is strict because $P_4$ is not a star centred at $v$. As an exact example of Theorem~\ref{thm:totalstrict}, $G=P_3$ and $\alpha=1/2$ give
\[
 \ell^C_{1/2}(P_3)=3
 <4=2\mathcal E_{1/2}(P_3)+\frac{1}{2}\operatorname{dev}(P_3).
\]

All tabulated values were independently recalculated using a symmetric eigensolver. Additional consistency checks enumerate every labelled simple graph of orders two through five with
\[
 \alpha\in\{0,0.1,0.25,0.4,0.49,0.5,0.75,0.9,1\}.
\]
These checks comprise 48,636 single-vertex cases and 9,882 graph--parameter cases. They test the single-vertex trace-norm and centred-energy bounds, the connected single-vertex centred equality classification, the total local-energy bounds, and strictness of the total centred bound for connected nonregular graphs when $0<\alpha<1$. Further checks cover 294,894 nonempty vertex-set deletion cases for the general-set lower bound (with its equality criterion tested for $\alpha>0$), and 51,408 independent-twin cases with positive common-neighbourhood size for the trace-norm and centred-energy bounds and the connected centred equality classification. No violations were found using absolute tolerance $10^{-8}$. These computations are consistency tests rather than proofs; the arguments above establish the results.

\section{Conclusion}
The trace norm and the centred energy of $A_\alpha(G)$ have different responses to vertex deletion. The trace norm is monotone under induced-subgraph deletion, and its sharp upper deletion bounds admit structural equality characterizations. In contrast, the centred local energy may have either sign, and its total-energy estimate requires a degree-deviation term. Exact perturbation spectra give explicit bounds for single vertices and independent twins in both settings.

The centred deletion bounds now have a complete equality classification on connected graphs for $0\le\alpha<1$. The general-set trace-norm lower bound is classified for $0<\alpha\le1$, and total centred-energy equality has a degree criterion at $\alpha=1$. Further questions concern the disconnected centred equality graphs for $0<\alpha<1$, equality among regular connected graphs in the total centred bound for that range, the optimal coefficient in the total centred-energy bound for a fixed $\alpha$ and order, and equality in $\ell^T_\alpha(G)=2\T(G)$ for $0<\alpha<1/2$. The last question has additional equality families beyond matching components. For example, if $G=K_n$ with $n\ge4$ and $1/(n-1)\le\alpha<1/2$, both $A_\alpha(G)$ and every $A_\alpha(G-v)$ are positive semidefinite, and the total-energy equality follows from their traces.

\end{document}